\theoremstyle{plain}
\newtheorem{theorem}{Theorem}[section]
\newtheorem{lemma}[theorem]{Lemma}
\newtheorem{corollary}[theorem]{Corollary}
\theoremstyle{definition}
\newtheorem{definition}[theorem]{Definition}
\theoremstyle{remark}
\newtheorem{remark}{Remark}
    \DeclareMathOperator\supp{supp}
    \newcommand{\mb}{\mathbb}
    \newcommand{\mm}{\mathrm}
\begin{document}

\title{Nonexistence of global solutions for generalized Tricomi equations with combined nonlinearity}



\author{Wenhui Chen$^\mathrm{a}$, Sandra Lucente$^\mathrm{b}$, Alessandro Palmieri$^\mathrm{c}$
}

\date{\small{$^\mathrm{a}$ School of Mathematical Sciences, Shanghai Jiao Tong University, 200240 Shanghai, China}\\
\small{$^\mathrm{b}$ Department of Physics, University of Bari, Via E. Orabona 4, 70125 Bari, Italy}\\
 \small{$^\mathrm{c}$ Department of Mathematics, University of Pisa,  Largo B. Pontecorvo 5, 56127 Pisa, Italy}
}

\maketitle

\begin{abstract}

In the present paper, we investigate the blow-up dynamics for local solutions to the semilinear generalized Tricomi equation with combined nonlinearity. As a result, we enlarge the blow-up region in comparison to the ones for the corresponding semilinear models with either power nonlinearity or nonlinearity of derivative type. Our approach is based on an iteration argument to establish lower bound estimates for the space average of local solutions. Finally, we obtain upper bound estimates for the lifespan of local solutions as byproduct of our iteration argument.

\end{abstract}

\begin{flushleft}
\textbf{Keywords:} blow-up, generalized Tricomi operator, combined nonlinearity, critical curve
\end{flushleft}

\begin{flushleft}
\textbf{AMS Classification (2020)} Primary:  35B44, 35L71; Secondary: 35L05
\end{flushleft}

\section{Introduction}

Aim of the present work is to derive a blow-up result for the semilinear \emph{generalized Tricomi equation} with a combined nonlinearity
\begin{align}\label{Semi_Tri_Combined}
\begin{cases} 
\partial_t^2 u- t^{2\ell}\Delta u =|\partial_t u|^p+|u|^q, &  x\in \mathbb{R}^n, \ t>0,\\
u(0,x)=\varepsilon u_0(x), & x\in \mathbb{R}^n, \\ 
\partial_t u(0,x)=\varepsilon u_1(x), & x\in \mathbb{R}^n,
\end{cases}
\end{align} 
where $\ell$ is a positive parameter, $p,q>1$ and $\varepsilon$ is a positive constant describing the size of Cauchy data. More specifically, our goal is to enlarge the blow-up region in the $(p,q)$ - plane in comparison to the ones for the corresponding models with \emph{power nonlinearity} $|u|^q$ and \emph{nonlinearity of derivative type} $|\partial_t u|^p$, respectively.

Over the last years, several papers have been devoted to the study of semilinear Cauchy problem associated to the \emph{generalized Tricomi operator} $\partial_t^2-t^{2\ell} \Delta$. In \cite{HWY17} the authors conjectured the critical exponent for the semilinear generalized Tricomi equation with power nonlinearity in space dimension $n\geqslant 2$
\begin{align}\label{Semi_Tri_Pow_Non}
\begin{cases} 
\partial_t^2 u- t^{2\ell}\Delta u =|u|^p, &  x\in \mathbb{R}^n, \ t>0,\\
u(0,x)=\varepsilon u_0(x), & x\in \mathbb{R}^n, \\ 
\partial_t u(0,x)=\varepsilon u_1(x), & x\in \mathbb{R}^n,
\end{cases}
\end{align} where $\ell>0$ and $p>1$. More precisely, this exponent, denoted by $p_0(n;\ell)$ in the present paper, is the  positive root  of the quadratic equation
\begin{align} \label{exponent p0 He-Witt-Yin}
((\ell+1)n-1)p^2-((\ell+1)n+1-2\ell)p-2(\ell+1)=0.
\end{align}
For $\ell=0$ the exponent $p_0(n;\ell)$ coincides with  the so-called \emph{Strauss exponent} (denoted by $p_{\mathrm{Str}}(n)$), which is the critical exponent for the semilinear wave equation with power nonlinearity.

 This conjecture was made according to a blow-up result for the subcritical case $1<p<p_0(n;\ell)$ when $n\geqslant 2$ proved by means of the so-called Kato's lemma for second order ordinary differential inequalities. Then, in \cite{HWY16t} and \cite{HWY17d1}, the global existence of small data solutions in suitable weighted Sobolev spaces in space dimension $n\geqslant 3$ and $n=2$, respectively, is proved for $p>p_0(n;\ell)$ in the subconformal case. Furthermore, in \cite{HWY17d1} a blow-up result is proved in the critical case $p=p_0(n;\ell)$ as well (when $n\geqslant 2$). Recently, in \cite{HWY20} the one dimensional case has been considered and it has been shown that the critical exponent is no longer a generalized Strauss exponent, rather the Kato-type exponent $1+\frac{2}{\ell}$ (all explicit computations are done for the Tricomi operator, namely, for $\ell=1/2$).

On the other hand, for the semilinear generalized Tricomi equation with nonlinearity of derivative type 
\begin{align}\label{Semi_Tri_Non_Der}
\begin{cases} 
\partial_t^2 u- t^{2\ell}\Delta u =|\partial_t u|^p, &  x\in \mathbb{R}^n, \ t>0,\\
u(0,x)=\varepsilon u_0(x), & x\in \mathbb{R}^n, \\ 
\partial_t u(0,x)=\varepsilon u_1(x), & x\in \mathbb{R}^n,
\end{cases}
\end{align} in \cite{LP20} the second and the third author proved a blow-up result for $1<p\leqslant p_1(n;\ell)$, where $$ p_1(n;\ell) \doteq \frac{(\ell+1)n+1}{(\ell+1)n-1}=p_{\mathrm{Gla}}((\ell+1)n) \quad \mbox{and} \quad p_{\mathrm{Gla}}(n)\doteq  1+ \frac{2}{n-1}$$ is the so-called \emph{Glassey exponent}, which is, the critical exponent for the semilinear wave equation with nonlinearity of derivative type $|\partial_t u|^p$.  In \cite{LS20} the authors announced to have enlarged this blow-up range, employing an approach based on the test function method. 

Studying the blow-up dynamics for local solutions to the semilinear Cauchy problem \eqref{Semi_Tri_Combined}, we want to emphasize how the contemporary presence of a power nonlinearity $|u|^q$ and of a nonlinearity of derivative type $|\partial_t u|^p$ produces a larger blow-up region in the $(p,q)$ - plane than the one that has been got by dealing individually with each of these two kinds of nonlinear terms as in the treatment of  \eqref{Semi_Tri_Pow_Non} or  \eqref{Semi_Tri_Non_Der}.

Finally, we mention that the semilinear wave models with \emph{combined nonlinearity} $|\partial_t u|^p+|u|^q$ (namely, \eqref{Semi_Tri_Combined} for $\ell=0$) has been already studied in \cite{HZ14,HWY16,ISW18,DFW19}. In particular, in \cite{HZ14,ISW18} blow-up results are shown for $p,q>1$ such that $(q-1)((n-1)p-2)<4$, under suitable sign assumptions for the Cauchy data. On the other hand, in \cite{HWY16} it is shown the optimality of the previous condition, by proving the global existence of small data solution for $p,q>1$ such that $(q-1)((n-1)p-2)\geqslant 4$, $p>p_{\mathrm{Gla}}(n)$, and $q>p_{\mathrm{Str}}(n)$ in space dimension $n=2,3$. 

Finally, we point out that other semilinear wave models with combined nonlinearity have been investigated (from the viewpoint of the blow-up dynamics) in the \emph{scattering producing case} \cite{LT18ComNon} and in the \emph{scale-invariant case} \cite{HH20,HH20i,HH20m}.

\color{black}

\begin{definition} \label{Defn_Energy_Solution} Let $u_0\in H^1(\mathbb{R}^n)$, $u_1\in L^2(\mathbb{R}^n)$. We say that $u$ is an energy solution to \eqref{Semi_Tri_Combined} on $[0,T)$ if 
\begin{align*}
u\in \mathcal{C}\big([0,T), H^1(\mathbb{R}^n)\big) \cap \mathcal{C}^1\big([0,T), L^2(\mathbb{R}^n)
\big)\cap L^q_{\mathrm{loc}}((0,T)\times \mathbb{R}^n) \ \ \mbox{such that} \ \ \partial_t u \in  L^p_{\mathrm{loc}}((0,T)\times \mathbb{R}^n) 
\end{align*} satisfies $u(0,\cdot)= \varepsilon u_0$ in $H^1(\mathbb{R}^n)$ and the integral identity 
\begin{align}
& \int_{\mathbb{R}^n} \partial_t u(t,x) \psi(t,x) \, \mathrm{d}x -\int_0^t \int_{\mathbb{R}^n} \partial_t u(s,x) \psi_t(s,x) \, \mathrm{d}x \, \mathrm{d}s + \int_0^t \int_{\mathbb{R}^n} s^{2\ell}\, \nabla u(s,x) \cdot \nabla \psi(s,x) \, \mathrm{d}x \, \mathrm{d}s \notag \\
 &\quad =\varepsilon \int_{\mathbb{R}^n} u_1(x) \psi(0,x) \, \mathrm{d}x+ \int_0^t \int_{\mathbb{R}^n} \big(|\partial_t u(s,x)|^p+|u(s,x)|^q\big) \psi(s,x) \, \mathrm{d}x \, \mathrm{d}s \label{Defn_Eq_Energy_SOlution}
\end{align}
 for any test function $\psi \in \mathcal{C}^\infty_0([0,T)\times \mathbb{R}^n)$ and any $t\in (0,T)$.
\end{definition}
We point out that performing a further step of integration by parts in \eqref{Defn_Eq_Energy_SOlution}, we obtain the integral relation
\begin{align}
& \int_{\mathbb{R}^n} \left( \partial_t u(t,x) \psi(t,x)- u(t,x) \psi_t(t,x)\right) \mathrm{d}x + \int_0^t \int_{\mathbb{R}^n} u(s,x) \left( \psi_{tt}(s,x)- s^{2\ell}\Delta \psi(s,x)\right)  \mathrm{d}x \, \mathrm{d}s  \notag \\
 &\quad = \varepsilon \int_{\mathbb{R}^n} \big(u_1(x) \psi(0,x)-u_0(x) \psi_t(0,x)\big) \, \mathrm{d}x + \int_0^t \int_{\mathbb{R}^n} \big(|\partial_t u(s,x)|^p+|u(s,x)|^q\big) \psi(s,x) \, \mathrm{d}x \, \mathrm{d}s
\label{Defn_Eq_2_Energy_Solution}
\end{align} 
for any $\psi \in \mathcal{C}^\infty_0([0,T)\times \mathbb{R}^n)$ and any $t\in (0,T)$.


\begin{remark} Let us underline that we could work with less restrictive assumptions on the space for the solutions to \eqref{Semi_Tri_Combined}, requiring
 $u_0,u_1\in L^1_{\mathrm{loc}}(\mathbb{R}^n)$ with $\supp u_0$, $\supp u_1\subset B_R$  and assuming the existence of a solution
 \begin{align*}
u\in \mathcal{C}([0,T), W^{1,1}_{\mathrm{loc}}(\mathbb{R}^n)) \cap \mathcal{C}^1([0,T), L^1_{\mathrm{loc}}(\mathbb{R}^n))\cap L^q_{\mathrm{loc}}((0,T)\times \mathbb{R}^n) \ \ \mbox{such that} \ \ \partial_t u \in  L^p_{\mathrm{loc}}((0,T)\times \mathbb{R}^n) 
\end{align*}  
satisfying the support condition
\begin{align*} 
\supp u(t,\cdot) \subset B_{R+\, t^{\ell+1}/(\ell+1)} \qquad \mbox{for any} \ \ t\in (0,T),
\end{align*} 
 and  fulfilling the integral relations \eqref{Defn_Eq_Energy_SOlution} or, equivalently, \eqref{Defn_Eq_2_Energy_Solution} for any $\psi \in \mathcal{C}^\infty_0([0,T)\times \mathbb{R}^n)$ and any $t\in (0,T)$. However, we prefer to restrict our considerations to energy solutions for which the validity of both a local existence result for the Cauchy problem \eqref{Semi_Tri_Combined} and the property of finite propagation speed with the above written curved light-cone can be established by following the approach in \cite{DAn95}.
\end{remark}


Let us state the main result of this paper.
\begin{theorem} \label{Theorem}
Let $p,q>1$ satisfy
\begin{align} \label{blow up range p,q}
\big[((\ell+1)n-1)p-2\ell(p-1)-2\big](q-1)<4.
\end{align} Let us assume that $u_0\in H^1(\mathbb{R}^n)$ and $u_1\in L^2(\mathbb{R}^n)$ are nonnegative, not both trivial and compactly supported functions with supports contained in $B_R$ for some $R>0$. Let $$u\in \mathcal{C}([0,T), H^1(\mathbb{R}^n)) \cap \mathcal{C}^1([0,T), L^2(\mathbb{R}^n))\cap L^q_{\mathrm{loc}}((0,T)\times \mathbb{R}^n) \ \ \mbox{such that} \ \ \partial_t u \in  L^p_{\mathrm{loc}}((0,T)\times \mathbb{R}^n) $$ be an energy solution to \eqref{Semi_Tri_Combined} according to Definition \ref{Defn_Energy_Solution} with lifespan $T=T(\varepsilon)$.

Then, there exists a positive constant $\varepsilon_0=\varepsilon_0(n,\ell,p,q,u_0,u_1,R)$ such that for any $\varepsilon\in (0,\varepsilon_0]$ the energy solution $u$ blows up in finite time. Furthermore, the upper bound estimate for the lifespan
\begin{align*}
T(\varepsilon)\leqslant C \varepsilon^{-\frac{p(q-1)}{\theta(n,\ell,p,q)}}
\end{align*}
holds, where the positive constant $C$ is independent of $	\varepsilon$ and
\begin{align} \label{def theta}
\theta(n,\ell,p,q)\doteq 2 -\tfrac 12 \big[((\ell+1)n-1)p-2\ell(p-1)-2\big](q-1).
\end{align}
\end{theorem}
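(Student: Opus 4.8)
The plan is to reduce \eqref{Semi_Tri_Combined} to a pair of ordinary differential inequalities for the space average of $u$ and then to run a single iteration in which both nonlinear terms cooperate.

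\textbf{Functional and basic identity.} I would work with the space average $U(t)\doteq\int_{\mathbb{R}^n}u(t,x)\,\mathrm{d}x$. Since $u(t,\cdot)$ is supported in the ball $B_{R+\phi(t)}$ with $\phi(t)\doteq t^{\ell+1}/(\ell+1)$, I may choose in \eqref{Defn_Eq_Energy_SOlution} a test function equal to $1$ on a neighbourhood of this ball, so that the term with $s^{2\ell}\nabla u\cdot\nabla\psi$ vanishes. This gives $U'(t)=\varepsilon\int_{\mathbb{R}^n}u_1\,\mathrm{d}x+\int_0^t\int_{\mathbb{R}^n}(|\partial_t u|^p+|u|^q)\,\mathrm{d}x\,\mathrm{d}s$ and, after one more differentiation,
\[
U''(t)=\int_{\mathbb{R}^n}\big(|\partial_t u(t,x)|^p+|u(t,x)|^q\big)\,\mathrm{d}x\geqslant 0 .
\]
As $u_0,u_1\geqslant 0$ and are not both trivial, $U$ and $U'$ stay nonnegative, and convexity already yields the baseline bound $U(t)\geqslant\varepsilon\big(\int_{\mathbb{R}^n}u_0\,\mathrm{d}x+t\int_{\mathbb{R}^n}u_1\,\mathrm{d}x\big)$.

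\textbf{Two differential inequalities.} Next I would apply Hölder's inequality on $B_{R+\phi(t)}$, whose Lebesgue measure is comparable to $(R+\phi(t))^n$, and use $U=\int u$, $U'=\int\partial_t u$ together with the nonnegativity of $U,U'$, to obtain
\[
U''(t)\geqslant c\,(R+\phi(t))^{-n(q-1)}\,U(t)^q\qquad\text{and}\qquad U''(t)\geqslant c\,(R+\phi(t))^{-n(p-1)}\,U'(t)^p
\]
for $t\in(0,T)$, with $c=c(n,p,q)>0$. The weight $(R+\phi(t))^{-n(\cdot-1)}$ encodes the growth of the curved light-cone and is the only place where the Tricomi factor $t^{2\ell}$ enters.

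\textbf{Coupled iteration.} I would then prove by induction a family of polynomial lower bounds $U(t)\geqslant M_j\,\phi(t)^{\beta_j}$, valid for $t$ past a sequence of base times (a slicing procedure is convenient to treat the borderline of the region and to absorb polynomial losses). Feeding such a bound into the first inequality and integrating twice in $t$ raises the exponent through $\beta_{j+1}=q\beta_j-n(q-1)+2/(\ell+1)$ while multiplying the constant by a factor of order $c\,M_j^{q-1}\beta_j^{-2}$; the second, derivative-driven inequality is used ---after bounding $U'$ from below by integrating the $|u|^q$-contribution once, or directly from the data--- to produce the initial exponent $\beta_0$ and the initial size $M_0$, and it is here that the power $p$ and a factor $\varepsilon^{p}$ are injected into the scheme. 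The essential feature of the combined nonlinearity is that this derivative-driven initialization shifts the starting point of the power iteration, so that the condition guaranteeing $M_j\to\infty$ becomes exactly \eqref{blow up range p,q}, i.e. $\theta(n,\ell,p,q)>0$ with $\theta$ as in \eqref{def theta}, rather than the separate Strauss- or Glassey-type thresholds that one obtains by using either nonlinearity in isolation.

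\textbf{Blow-up, lifespan, and the main obstacle.} Once $M_j\to\infty$ while the exponents $\beta_j$ remain controlled at a fixed time, the lower bounds contradict the finiteness of $U(t)$, forcing $T<\infty$. Making the dependence on $\varepsilon$ explicit ---the recursion for $\log M_j$ is of the form $\log M_{j+1}\geqslant q\log M_j - C_1 j - C_2$, whose solution diverges precisely when the initial size $M_0$, hence $\phi(t)$, exceeds a threshold scaling like a power of $\varepsilon^{-1}$--- and balancing the powers of $\varepsilon$ and $\phi(t)$ carried by the initialization against the deficit $\theta$ yields the lifespan estimate $T(\varepsilon)\leqslant C\varepsilon^{-p(q-1)/\theta}$. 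I expect the genuine difficulty to lie exactly in the coupling step: one must orchestrate the derivative inequality and the power inequality so that \emph{both} exponents $p$ and $q$ contribute to the growth condition (producing the combined range $\big[((\ell+1)n-1)p-2\ell(p-1)-2\big](q-1)<4$ instead of only one of the two classical conditions), and one must keep accurate track of all constants and base-time shifts through the iteration in order to extract the sharp lifespan exponent $p(q-1)/\theta$.
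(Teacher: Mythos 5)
There is a genuine gap in the initialization step, and it changes the final answer. Your derivative-driven inequality is obtained by H\"older's inequality on the full ball $B_{R+\phi(t)}$, whose measure is of order $(R+\phi(t))^{n}$, so the best first lower bound it can produce from $U'(t)\geqslant \varepsilon\int u_1$ is
\begin{align*}
U(t)\gtrsim \varepsilon^{p}\,(1+t)^{\,2-(\ell+1)n(p-1)} .
\end{align*}
Feeding this $\beta_0-\alpha_0=2-(\ell+1)n(p-1)$ into the $|u|^q$-iteration (whose recursion $\beta_{j+1}-\alpha_{j+1}=q(\beta_j-\alpha_j)+2-(\ell+1)n(q-1)$ you describe correctly) yields the divergence condition $\big[(\ell+1)np-2\big](q-1)<4$, \emph{not} \eqref{blow up range p,q}. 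Since $(\ell+1)np-\big[((\ell+1)n-1)p-2\ell(p-1)\big]=p+2\ell(p-1)>0$, your region is strictly smaller than the claimed one, and the resulting lifespan exponent differs from $p(q-1)/\theta(n,\ell,p,q)$. The assertion that your coupling ``becomes exactly \eqref{blow up range p,q}'' is therefore not substantiated by the ingredients you list.

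The missing idea is the weighted H\"older step of Yordanov--Zhang type. The paper tests the equation against the positive solution $\Psi(t,x)=\lambda(t)\varphi(x)$ of the linear generalized Tricomi equation, where $\varphi$ is the exponential eigenfunction \eqref{Eigen_Laplace} with $\varphi(x)\sim |x|^{-(n-1)/2}\mathrm{e}^{|x|}$ and $\lambda$ is the modified-Bessel factor \eqref{Lambda_Fun} with $\lambda(t)\sim t^{-\ell/2}\mathrm{e}^{-\phi_\ell(t)}$. Two first-order differential inequalities for $U_0=\int u\Psi\,\mathrm{d}x$ and $U_1=\int \partial_t u\,\Psi\,\mathrm{d}x$ give $U_1(t)\gtrsim\varepsilon$, and then H\"older's inequality weighted by $\Psi$ (whose $L^{p/(p-1)}$ mass concentrates on a shell near the boundary of the curved light cone, with the exponentials cancelling) gives
\begin{align*}
\int_{\mathbb{R}^n}|\partial_t u(\tau,x)|^{p}\,\mathrm{d}x\;\gtrsim\;\varepsilon^{p}\,\tau^{\frac{\ell p}{2}}\,(R+\phi_\ell(\tau))^{(n-1)(1-\frac{p}{2})},
\end{align*}
i.e.\ an effective $(n-1)$-dimensional loss of order $p/2$ plus a gain $\tau^{\ell p/2}$, instead of the $n$-dimensional loss of order $p-1$ in your version. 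It is precisely this refinement that produces the coefficient $((\ell+1)n-1)p-2\ell(p-1)$ in \eqref{blow up range p,q} and the exponent $\theta$ in \eqref{def theta}. Your overall architecture (space average, double time integration, $|u|^q$-iteration frame, $\log C_j$ recursion, and the way $\varepsilon^p$ enters through the derivative term) matches the paper; what must be repaired is the first lower bound, which has to be \eqref{Est_Inte_Dtu}--\eqref{First_Lower_Bound_1} rather than the flat-measure one.
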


The paper is organized as follows: in Section \ref{Section proof main thm} we provide the proof of Theorem \ref{Theorem}. In Section \ref{Section |u|^q} we recall some results already known in the case of the semilinear generalized Tricomi equation with power nonlinearity that follow straightforwardly by slightly modifying the proof in Section \ref{Section proof main thm}. Finally, in Section \ref{Section Final Rem} we explain in detail the obtained blow-up range and we compare it with corresponding blow-up results for \eqref{Semi_Tri_Pow_Non} and \eqref{Semi_Tri_Non_Der}, respectively; in particular, we put a special emphasis on the analysis of the one dimensional case.

\subsection{Notations}

Throughout the paper we employ the following notations: 
the ball in $\mathbb{R}^n$ with radius $R$ around the origin is denoted $B_R$; the notations $f\lesssim g$ means that there exists a positive constant $C$ such that $f\leqslant C g$ and, similarly, for $f\gtrsim g$;  by writing $\psi\in \mathcal{C}^\infty_0([0,T)\times \mathbb{R}^n)$ we mean the existence of a compact set  $\mathcal{K}\subset \mathbb{R}^{1+n}$ such that the support of the $\mathcal{C}^\infty$ function $\psi$ satisfies $\mathrm{supp} \psi \subset \mathcal{K}\subset [0,T)\times \mathbb{R}^n$;
$\mathrm{K}_\nu$ denotes the modified Bessel function of second kind of order $\nu$. Finally, as in the introduction, $p_0(n;\ell)$ is the positive solution to \eqref{exponent p0 He-Witt-Yin}, $p_{\mathrm{Str}}(n)=p_0(n;0)$ denotes the Strauss exponent, and $p_{\mathrm{Gla}}(n)=p_1(n;0)$ denotes the Glassey exponent. When $1+2/(n-1)$ refers to the nonlinear term $|u|^q$ we call it \emph{Kato exponent} (denoted by $p_{\mathrm{Kat}}(n)$), as it is customary in the related literature, although it coincides with the Glassey exponent.

\section{Proof of the main result} \label{Section proof main thm}

Let $u$ be an energy solution to \eqref{Semi_Tri_Combined} according to Definition \ref{Defn_Energy_Solution} that fulfills the support condition
\begin{align}\label{finite speed propagation}
\mathrm{supp} \, u(t,\cdot) \subset B_{R+\phi_\ell(t)} \qquad \mbox{for any} \ t\in (0,T), \ \ \mbox{where} \ \  \phi_\ell(t) \doteq \tfrac{t^{\ell+1}}{\ell+1}.
\end{align}
  As time-dependent function, whose dynamic will provide the blow-up result, we consider the space average of $u$ (following an 
approach introduced for the first time in \cite{Kato80}), namely,
\begin{align*}
U(t)\doteq \int_{\mathbb{R}^n} u(t,x) \, \mathrm{d}x.
\end{align*}
We will derive the iteration frame for this functional. Furthermore, we will employ also two auxiliary functionals, whose definitions will be provided in the next subsection, see \eqref{Def_U0} and \eqref{Def_U1}, to determine a first lower bound estimate for $U$.

 
 \color{black}

\subsection{The test function}

In this section, we recall from \cite{HWY17} the definition of a function $\Psi=\Psi(t,x)$ with separate variables that solves the linear generalized Tricomi equation, namely 
\begin{align}\label{Adj_Eq}
\Psi_{tt}- t^{2\ell}\Delta \Psi=0.
\end{align}
As $x$-dependent function we consider the eigenfunction for the Laplacian introduced in \cite{YZ06}
\begin{align} \label{Eigen_Laplace}
\varphi(x)\doteq \begin{cases}   
\mathrm{e}^{x}+\mathrm{e}^{-x} & \mbox{if} \ n=1, \\
\displaystyle{\int_{\mathbb{S}^{n-1}}\mathrm{e}^{x\cdot \omega} \mathrm{d} \sigma_\omega} & \mbox{if} \ n\geqslant 2.
\end{cases}
\end{align} This function is radially symmetric,  belongs to the class $\mathcal{C}^\infty(\mathbb{R}^n)$ and satisfies the following notable properties:
\begin{align}
&\Delta \varphi = \varphi, \label{eigenfunction property} \\
& \varphi(x) \sim |x|^{-\frac{n-1}{2}}\mathrm{e}^{|x|} \quad \mbox{as} \ |x|\to \infty. \label{eigenfunction funct asy est}
\end{align}
In the next lines we employ some well-known properties of the modified Bessel function of the second kind. Nevertheless, for the sake of readability we address the reader to Appendix \ref{App A} for a short recap of these properties.  Let us recall the time-dependent function $\lambda=\lambda(t;\ell)$ introduced in \cite{HWY17} such that 
\begin{align} \label{Lambda_Fun}
\lambda(t;\ell)\doteq C_{\ell}\,t^{\frac{1}{2}} \mathrm{K}_{\frac{1}{2\ell+2}}\left(\tfrac{1}{\ell+1}t^{\ell+1}\right),
\end{align} 
with the positive constant $C_{\ell}$ allowing $\lambda(0;\ell)=1$, where $\mathrm{K}_{\nu}$ denotes the modified Bessel function of the second kind of order $\nu$. Note that we use \eqref{asymptotic K alpha small} to get a finite, positive value of $\lambda$ as $t\to 0$.
For the sake of brevity, hereafter, we will write simply $\lambda=\lambda(t)$ skipping the dependence on $\ell$. Due to the fact that $\mathrm{K}_{\nu}$ fulfills the second order ordinary differential equation \eqref{ODE Mod Bessel funct 2nd kind}
 by straightforward computations it follows that $\lambda$ satisfies the following problem:
\begin{align}\label{ODE_Lambda}
\begin{cases}
\lambda''(t)-t^{2\ell}\lambda(t)=0,&t>0,\\
\lambda(0)=1,\ \lambda(\infty)=0.
\end{cases}
\end{align} 
Therefore, combining \eqref{eigenfunction property} and \eqref{ODE_Lambda}, it follows that the positive function $$\Psi(t,x)\doteq \lambda(t)\varphi(x)$$ solves the desired 
linear equation \eqref{Adj_Eq}. 

Let us point out that, using the recursive relations for the derivatives of modified Bessel functions of the second kind (cf. \eqref{recursive identity K' alpha} in Appendix \ref{App A})
we have
\begin{align}\label{lambda_prop_01}
\lambda'(t)&=\tfrac{C_{\ell}}{2}t^{-\frac{1}{2}}\mm{K}_{\frac{1}{2\ell+2}}\left(\tfrac{1}{\ell+1}t^{\ell+1}\right)+C_{\ell}\,t^{\frac{1}{2}}\left(\mm{K}_{\frac{1}{2\ell+2}}\left(\tfrac{1}{\ell+1}t^{\ell+1}\right)\right)'\notag\\
&=-C_{\ell}\,t^{\frac{1}{2}+\ell}\mm{K}_{\frac{1}{2\ell+2}-1}\left(\tfrac{1}{\ell+1}t^{\ell+1}\right)<0,
\end{align} where in the last line we used that $\mathrm{K}_\nu(\tau)$ is a positive function for $\nu\in \mathbb{R}$ and $\tau>0$.
Furthermore, there exists a constant $c_0>1$ such that
\begin{align}\label{lambda_prop_02}
\frac{|\lambda'(t)|}{\lambda(t)}\geqslant\frac{t^{\ell}}{c_0}\ \ \mbox{for any}\ \ t >0  \quad \mbox{and}  \quad  \frac{|\lambda'(t)|}{\lambda(t)}\leqslant c_0 \,  t^{\ell} \ \ \mbox{for any}\ \ t \geqslant 1,
\end{align}
as it has been shown in \cite[Lemma 2.1]{HL96}. Note that in \eqref{lambda_prop_02} we used that $\lambda$ is positive function ($\lambda$ is strictly decreasing and $\lambda(\infty)=0$).

Finally, we can introduce the two following auxiliary functionals:
\begin{align}
U_0(t) & \doteq \int_{\mathbb{R}^n}  u(t,x) \Psi(t,x) \, \mathrm{d}x, \label{Def_U0}\\
U_1(t) & \doteq \int_{\mathbb{R}^n} \partial_t u(t,x) \Psi(t,x) \, \mathrm{d}x. \label{Def_U1}
\end{align} Our strategy in the next subsection is to determine  lower bound estimates for $U_0,U_1$.

\subsection{Lower bound estimates for the auxiliary functionals}

Let us start this subsection by deriving a lower bound estimate for $U_0$.

\begin{lemma} \label{Lemma U0} Let $u_0,u_1$ be functions satisfying the same assumptions as in the statement of Theorem \ref{Theorem}. Let $U_0$ be the functional associated to a local (in time) energy solution $u$ of \eqref{Semi_Tri_Combined} and defined by \eqref{Def_U0}. Then, there exists $T_0> 0$ such that 
\begin{align}\label{lower bound U0}
U_0(t) \gtrsim \varepsilon I_{\ell}[u_0,u_1]\,t^{-\ell}\ \ \mbox{for any}\  t\in [2 T_0,T),
\end{align} where 
\begin{align} \label{def I[u0,u1]}
 I_{\ell}[u_0,u_1]\doteq \int_{\mathbb{R}^n} \big( u_1(x)-\lambda'(0;\ell)u_0(x) \big) \varphi(x) \, \mathrm{d}x  .
\end{align}
\end{lemma}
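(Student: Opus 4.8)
The plan is to reduce the estimate for $U_0$ to a Wronskian-type identity between $u$ (paired with the spatial eigenfunction $\varphi$) and the ODE solution $\lambda$. I set $F(t)\doteq \int_{\mathbb{R}^n} u(t,x)\varphi(x)\,\mathrm{d}x$, so that $U_0(t)=\lambda(t)F(t)$ in view of \eqref{Def_U0} and $\Psi=\lambda\varphi$; note that $F$ is well defined and of class $\mathcal{C}^1$ because $u(t,\cdot)$ is compactly supported thanks to \eqref{finite speed propagation}, even though $\varphi\notin L^2(\mathbb{R}^n)$. First I would fix $t\in(0,T)$ and choose a cut-off $\eta\in\mathcal{C}^\infty_0(\mathbb{R}^n)$ with $\eta\equiv 1$ on the ball $B_{R+\phi_\ell(t)}$, and test the integral relation \eqref{Defn_Eq_2_Energy_Solution} with $\psi(s,x)=\lambda(s)\varphi(x)\eta(x)$. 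Since $\Psi=\lambda\varphi$ solves the homogeneous equation \eqref{Adj_Eq} (by \eqref{eigenfunction property} and \eqref{ODE_Lambda}) and $\eta\equiv 1$ on the support of $u(s,\cdot)$ for all $s\leqslant t$, the volume term $\int_0^t\int_{\mathbb{R}^n} u\,(\psi_{tt}-s^{2\ell}\Delta\psi)\,\mathrm{d}x\,\mathrm{d}s$ vanishes identically.

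In this way the weak formulation collapses to the exact identity
\[
\lambda(t)F'(t)-\lambda'(t)F(t)=\varepsilon\, I_\ell[u_0,u_1]+\int_0^t \lambda(s)\,N(s)\,\mathrm{d}s,
\]
where $N(s)\doteq \int_{\mathbb{R}^n}\big(|\partial_t u(s,x)|^p+|u(s,x)|^q\big)\varphi(x)\,\mathrm{d}x\geqslant 0$, and the boundary contribution reproduces exactly $\varepsilon\,I_\ell[u_0,u_1]$ from \eqref{def I[u0,u1]} once one uses $\Psi(0,x)=\varphi(x)$ and $\Psi_t(0,x)=\lambda'(0)\varphi(x)$. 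Since $\lambda>0$ and $N\geqslant 0$, the left-hand side — which equals $\lambda^2(F/\lambda)'=\lambda^2(U_0/\lambda^2)'$ — is bounded below by $\varepsilon\,I_\ell[u_0,u_1]$; the assumptions on $u_0,u_1$ together with $\lambda'(0)<0$ (see \eqref{lambda_prop_01}) guarantee $I_\ell[u_0,u_1]>0$. Dividing by $\lambda^2$, integrating on $[0,t]$, and discarding the nonnegative datum term $U_0(0)=\varepsilon\int_{\mathbb{R}^n}u_0\varphi\,\mathrm{d}x\geqslant 0$, I obtain
\[
U_0(t)\geqslant \varepsilon\, I_\ell[u_0,u_1]\,\lambda(t)^2\int_0^t \frac{\mathrm{d}s}{\lambda(s)^2}.
\]

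It then remains to prove the purely ODE estimate $\lambda(t)^2\int_0^t\lambda(s)^{-2}\,\mathrm{d}s\gtrsim t^{-\ell}$ for $t$ large, which I expect to be the main obstacle. Rather than invoking the precise large-argument asymptotics of $\mathrm{K}_\nu$, I would argue solely from the two-sided bound \eqref{lambda_prop_02}. Writing $\tfrac{\mathrm{d}}{\mathrm{d}s}\lambda(s)^{-2}=2\lambda(s)^{-2}\,|\lambda'(s)|/\lambda(s)\leqslant 2c_0\, s^{\ell}\lambda(s)^{-2}$ and integrating over $[T_0,t]$ with $T_0\geqslant 1$ yields $\lambda(t)^{-2}-\lambda(T_0)^{-2}\leqslant 2c_0\, t^{\ell}\int_{T_0}^t \lambda(s)^{-2}\,\mathrm{d}s$, hence
\[
\lambda(t)^2\int_0^t \frac{\mathrm{d}s}{\lambda(s)^2}\geqslant \frac{1}{2c_0\, t^{\ell}}\left(1-\frac{\lambda(t)^2}{\lambda(T_0)^2}\right).
\]
Since $\lambda$ is strictly decreasing with $\lambda(\infty)=0$, I fix $T_0\geqslant 1$ so large that $\lambda(2T_0)^2\leqslant \tfrac12\lambda(T_0)^2$; then for every $t\geqslant 2T_0$ the bracket is $\geqslant \tfrac12$ and the claimed bound $U_0(t)\gtrsim \varepsilon\,I_\ell[u_0,u_1]\,t^{-\ell}$ in \eqref{lower bound U0} follows, which also explains the threshold $2T_0$. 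The two delicate points are thus the justification of the cut-off test-function step (routine once the finite propagation speed \eqref{finite speed propagation} is granted) and the monotone-integral estimate above, where bounding $\int_0^t\lambda^{-2}$ from below — equivalently, capturing the exponentially growing companion solution of \eqref{ODE_Lambda} — is the genuine analytic core.
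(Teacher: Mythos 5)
Your derivation of the key inequality $U_0(t)\geqslant \varepsilon I_\ell[u_0,u_1]\,\lambda^2(t)\int_0^t\lambda^{-2}(s)\,\mathrm{d}s$ is exactly the paper's: same test function $\Psi=\lambda\varphi$ in \eqref{Defn_Eq_2_Energy_Solution} (your identity $\lambda F'-\lambda'F=\varepsilon I_\ell[u_0,u_1]+\int_0^t\lambda N$ is \eqref{ODE_U0} in disguise), same integrating factor $\lambda^{-2}$, same use of $u_0\geqslant 0$ to discard $U_0(0)$; your cut-off justification of the non-compactly-supported test function is a welcome explicit version of what the paper only asserts. Where you genuinely diverge is the final ODE estimate. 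The paper invokes the precise large-argument asymptotics \eqref{asymptotic behavior lambda} of $\mathrm{K}_\nu$, writes $\lambda^2(t)\int_{t/2}^t\lambda^{-2}(s)\,\mathrm{d}s$ explicitly as $t^{-\ell}\mathrm{e}^{-2\phi_\ell(t)}\int_{t/2}^t s^\ell\mathrm{e}^{2\phi_\ell(s)}\,\mathrm{d}s$ and integrates; you instead derive $\lambda^2(t)\int_{T_0}^t\lambda^{-2}\geqslant \frac{1}{2c_0 t^\ell}\left(1-\lambda^2(t)/\lambda^2(T_0)\right)$ from the logarithmic-derivative bound alone, which is cleaner, avoids the exponential cancellation entirely, and would survive perturbations of $\lambda$ for which only \eqref{lambda_prop_02} is known. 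One small repair: the existence of $T_0$ with $\lambda(2T_0)^2\leqslant\tfrac12\lambda(T_0)^2$ does not follow merely from $\lambda$ being decreasing with $\lambda(\infty)=0$ (consider $\lambda(t)=1/\log(t+\mathrm{e})$); you should instead use the lower bound in \eqref{lambda_prop_02}, which gives $\log\big(\lambda(2T_0)/\lambda(T_0)\big)\leqslant -c_0^{-1}\int_{T_0}^{2T_0}s^\ell\,\mathrm{d}s\to-\infty$, so the required $T_0$ exists — consistent with your stated intention to argue solely from \eqref{lambda_prop_02}. With that one-line fix the proof is complete and correct.
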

\begin{proof}
Due to the property of finite speed of propagation, we have that $u$ fulfills \eqref{finite speed propagation}, where $R>0$ is chosen so that $\supp u_0$, $\supp u_1\subset B_R$.
In particular, thanks to this support condition for the solution $u$ we may consider not compactly supported test functions in \eqref{Defn_Eq_Energy_SOlution}. Therefore, choosing $\Psi=\lambda\varphi\in \mathcal{C}^\infty([0,T)\times \mathbb{R}^n)$ as test function in \eqref{Defn_Eq_2_Energy_Solution}, for any $t\in(0,T)$ we get
\begin{align*}
& \int_{\mathbb{R}^n}  \partial_t u(t,x) \Psi(t,x) \, \mathrm{d}x - \int_{\mathbb{R}^n}   u(t,x) \Psi_t(t,x) \, \mathrm{d}x \\ 
& \quad =\varepsilon \int_{\mathbb{R}^n} \big( u_1(x)-\lambda'(0)u_0(x) \big) \varphi(x) \, \mathrm{d}x   +  \int_0^t \int_{\mathbb{R}^n} \big(|\partial_t u(s,x)|^p+|u(s,x)|^q\big) \Psi(s,x) \, \mathrm{d}x \, \mathrm{d}s \\ 
& \quad = \varepsilon I_{\ell}[u_0,u_1] +  \int_0^t \int_{\mathbb{R}^n} \big(|\partial_t u(s,x)|^p+|u(s,x)|^q\big) \Psi(s,x) \, \mathrm{d}x \, \mathrm{d}s,  
\end{align*} where we employed $\lambda(0)=1$ and \eqref{Adj_Eq}. Hence, by using \eqref{Def_U0}, we may rewrite the previous identity as follows:
\begin{align}\label{ODE_U0}
U'_0(t) -2 \frac{\lambda'(t)}{\lambda(t)} U_0(t)  = \varepsilon I_{\ell}[u_0,u_1] +  \int_0^t \int_{\mathbb{R}^n} \big(|\partial_t u(s,x)|^p+|u(s,x)|^q\big) \Psi(s,x) \, \mathrm{d}x \, \mathrm{d}s 
\end{align} 
for any $t\in(0,T)$. Since the nonlinearity is nonnegative, from \eqref{ODE_U0} we have
\begin{align}\label{First_Order_Ineq_U'0}
U'_0(t) -2 \frac{\lambda'(t)}{\lambda(t)} U_0(t) \geqslant  \varepsilon I_{\ell}[u_0,u_1]
\end{align}  
for any $t\in(0,T)$. Multiplying both sides of the previous inequality by $(\lambda(t))^{-2}$ and integrating over $[0,t]$, it results
\begin{align} \label{intermediate estimate U0}
\frac{U_0(t)}{\lambda^2(t)}- \frac{U_0(0)}{\lambda^2(0)} \geqslant \varepsilon I_{\ell}[u_0,u_1] \int_0^t\frac{\mathrm{d}s}{\lambda^2(s)}.
\end{align} Thus, employing the sign assumption on the first initial data, we find
\begin{align*}
U_0(t) \geqslant \varepsilon I_{\ell}[u_0,u_1] \, \lambda^2(t) \int_0^t\frac{\mathrm{d}s}{\lambda^2(s)}.
\end{align*} 
By using the asymptotic behavior for the modified Bessel function of the second kind (see \eqref{asymptotic K alpha} in Appendix \ref{App A}), we have
\begin{align} \label{asymptotic behavior lambda}
\lambda(t) = \sqrt{\frac{\pi(\ell+1)}{2}} \, C_{\ell} \, t^{-\frac{\ell}{2}}\mathrm{e}^{-\phi_\ell(t)}\left(1+O\big(t^{-(\ell+1)}\big)\right)\ \  \mbox{as} \ \ t\to \infty.
\end{align} Consequently, for large $t$ so that $0<T_0\leqslant t<T$, we can estimate
\begin{align*}
U_0(t) & \gtrsim  \varepsilon I_{\ell}[u_0,u_1]\,t^{-\ell}\mathrm{e}^{-2 \phi_\ell(t)} \int_{T_0}^ts^{\ell}\mathrm{e}^{2\phi_\ell(s)} \, \mathrm{d}s.
\end{align*}
Therefore, for $t\in [2 T_0,T)$
 \begin{align*} 
U_0(t) & \gtrsim  \varepsilon I_{\ell}[u_0,u_1]\,t^{-\ell}\mathrm{e}^{-2 \phi_\ell(t)} \int_{t/2}^ts^{\ell}\mathrm{e}^{2\phi_\ell(s)} \, \mathrm{d}s\\
&  \gtrsim  \varepsilon I_{\ell}[u_0,u_1]\,t^{-\ell} \left(1-\mathrm{e}^{2\phi_\ell(t/2) -2\phi_\ell(t)}\right) = \varepsilon I_{\ell}[u_0,u_1]\,t^{-\ell} \left(1-\mathrm{e}^{2(2^{-(\ell+1)}-1)\phi_\ell(t)}\right)\\
& \gtrsim  \varepsilon I_{\ell}[u_0,u_1]\,t^{-\ell}.
\end{align*}
 This completes the proof.
\end{proof}

Next we derive a lower bound estimate for the functional $U_1$. The proof of the next result is inspired by \cite[Lemma 3.3]{HH20i}.
\begin{lemma} \label{Lemma U1} Let $u_0,u_1$ be functions satisfying the same assumptions as in the statement of Theorem \ref{Theorem}. Let $U_1$ be the functional associated to a local (in time) energy solution $u$ of \eqref{Semi_Tri_Combined} and defined by \eqref{Def_U1}. Then, there exists $T_0>0$ such that 
\begin{align} \label{lower bound U1}
U_1(t) \gtrsim\varepsilon I_{\ell}[u_0,u_1] \ \  \mbox{for any} \  t\in [2 T_0,T),
\end{align}
where $I_{\ell}[u_0,u_1]$ was defined in \eqref{def I[u0,u1]}.
\end{lemma}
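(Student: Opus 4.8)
The plan is to follow the scheme of Lemma~\ref{Lemma U0}, deriving a first order differential relation for $U_1$, but starting this time from the integral identity \eqref{Defn_Eq_Energy_SOlution} rather than from \eqref{Defn_Eq_2_Energy_Solution}: in \eqref{Defn_Eq_Energy_SOlution} the term evaluated at time $t$ is exactly $\int_{\mathbb{R}^n}\partial_t u(t,x)\,\psi(t,x)\,\mathrm{d}x$, which equals $U_1(t)$ once we take $\psi=\Psi=\lambda\varphi$. As in Lemma~\ref{Lemma U0}, this (non compactly supported) test function is admissible because of the support condition \eqref{finite speed propagation}. Writing $\Psi_t=\lambda'\varphi$ to turn $\int_0^t\!\int_{\mathbb{R}^n}\partial_t u\,\Psi_t\,\mathrm{d}x\,\mathrm{d}s$ into $\int_0^t\frac{\lambda'(s)}{\lambda(s)}U_1(s)\,\mathrm{d}s$, and integrating by parts with \eqref{eigenfunction property} to get $\int_{\mathbb{R}^n}s^{2\ell}\nabla u\cdot\nabla\Psi\,\mathrm{d}x=-s^{2\ell}U_0(s)$, identity \eqref{Defn_Eq_Energy_SOlution} becomes
\[
U_1(t)-\int_0^t\frac{\lambda'(s)}{\lambda(s)}U_1(s)\,\mathrm{d}s-\int_0^t s^{2\ell}U_0(s)\,\mathrm{d}s=\varepsilon\int_{\mathbb{R}^n}u_1\varphi\,\mathrm{d}x+\int_0^t\!\int_{\mathbb{R}^n}\big(|\partial_t u|^p+|u|^q\big)\Psi\,\mathrm{d}x\,\mathrm{d}s .
\]
From this Volterra-type identity $U_1$ is locally absolutely continuous, and differentiating in $t$ and dropping the nonnegative nonlinear term yields
\[
U_1'(t)-\frac{\lambda'(t)}{\lambda(t)}U_1(t)\geqslant t^{2\ell}U_0(t)\geqslant 0 ,
\]
where the last inequality uses $U_0\geqslant 0$ on $[0,T)$, as follows from \eqref{intermediate estimate U0}.

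Multiplying by the integrating factor $\lambda(t)^{-1}>0$ recasts the left-hand side as $\frac{\mathrm{d}}{\mathrm{d}t}\big(U_1(t)/\lambda(t)\big)$, so that $\frac{\mathrm{d}}{\mathrm{d}t}\big(U_1/\lambda\big)\geqslant \lambda^{-1}t^{2\ell}U_0\geqslant 0$ on $[0,T)$. Since $U_1/\lambda$ is thus nondecreasing and $U_1(0)/\lambda(0)=\varepsilon\int_{\mathbb{R}^n}u_1\varphi\,\mathrm{d}x\geqslant 0$, the functional $U_1$ stays nonnegative; in particular the boundary term at $t=2T_0$ is nonnegative and may be dropped. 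Integrating over $[2T_0,t]$ and inserting the lower bound \eqref{lower bound U0}, namely $U_0(s)\gtrsim\varepsilon I_\ell[u_0,u_1]\,s^{-\ell}$, I obtain
\[
\frac{U_1(t)}{\lambda(t)}\geqslant\int_{2T_0}^t\frac{s^{2\ell}U_0(s)}{\lambda(s)}\,\mathrm{d}s\gtrsim\varepsilon I_\ell[u_0,u_1]\int_{2T_0}^t\frac{s^{\ell}}{\lambda(s)}\,\mathrm{d}s .
\]

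It then remains to estimate the last integral, which is of the same type already treated in Lemma~\ref{Lemma U0}. By the asymptotics \eqref{asymptotic behavior lambda} there are constants $0<c_1\leqslant c_2$ with $c_1 s^{-\ell/2}\mathrm{e}^{-\phi_\ell(s)}\leqslant\lambda(s)\leqslant c_2 s^{-\ell/2}\mathrm{e}^{-\phi_\ell(s)}$ for $s$ large, hence $s^{\ell}/\lambda(s)\gtrsim s^{3\ell/2}\mathrm{e}^{\phi_\ell(s)}=s^{\ell/2}\frac{\mathrm{d}}{\mathrm{d}s}\mathrm{e}^{\phi_\ell(s)}$ since $\phi_\ell'(s)=s^{\ell}$. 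Restricting the integration to $[t/2,t]$ and using $s^{\ell/2}\geqslant (t/2)^{\ell/2}$ there, I get $\int_{2T_0}^t s^{\ell}/\lambda(s)\,\mathrm{d}s\gtrsim t^{\ell/2}\mathrm{e}^{\phi_\ell(t)}\big(1-\mathrm{e}^{(2^{-(\ell+1)}-1)\phi_\ell(t)}\big)\gtrsim t^{\ell/2}\mathrm{e}^{\phi_\ell(t)}$. Therefore $U_1(t)/\lambda(t)\gtrsim\varepsilon I_\ell[u_0,u_1]\,t^{\ell/2}\mathrm{e}^{\phi_\ell(t)}$, and multiplying back by $\lambda(t)\geqslant c_1 t^{-\ell/2}\mathrm{e}^{-\phi_\ell(t)}$ the powers of $t$ and the exponentials cancel exactly, leaving $U_1(t)\gtrsim\varepsilon I_\ell[u_0,u_1]$ for $t\in[2T_0,T)$, which is precisely \eqref{lower bound U1}.

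The step I expect to require most care is the rigorous derivation of the differential inequality in the energy-solution setting: since $u$ is merely a weak solution, one should deduce that $U_1$ is locally absolutely continuous directly from the Volterra identity above and differentiate only afterwards, rather than differentiating $U_1$ naively. The quantitative heart of the argument, by contrast, is benign: it is exactly the matching between the growth $t^{\ell/2}\mathrm{e}^{\phi_\ell(t)}$ produced by the integral and the decay $t^{-\ell/2}\mathrm{e}^{-\phi_\ell(t)}$ of $\lambda$, which the sharp two-sided asymptotics \eqref{asymptotic behavior lambda} arrange so that their product is a positive constant; the exact-derivative trick and the dyadic restriction to $[t/2,t]$ borrowed from Lemma~\ref{Lemma U0} make this last computation routine.
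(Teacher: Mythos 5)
Your argument is correct, and it reaches \eqref{lower bound U1} by a genuinely different route than the paper. Both proofs ultimately exploit the same differential identity $U_1'-\tfrac{\lambda'}{\lambda}U_1-t^{2\ell}U_0=\int_{\mathbb{R}^n}(|\partial_t u|^p+|u|^q)\Psi\,\mathrm{d}x$ (the paper obtains it by differentiating \eqref{control U1 + (..) U_0}, you obtain it by testing \eqref{Defn_Eq_Energy_SOlution} directly with $\Psi$), but they diverge in how the source term is produced. The paper discards $t^{2\ell}U_0$ (keeping only $U_0\geqslant 0$), introduces a parameter $\omega\in\left(\tfrac12,\tfrac12+\tfrac{1}{2c_0^2}\right)$ and the auxiliary functions $h_1,h_2$, and uses the two-sided bounds \eqref{lambda_prop_01}--\eqref{lambda_prop_02} on $\lambda'/\lambda$ together with the constant $\varepsilon I_\ell[u_0,u_1]$ already sitting in \eqref{control U1 + (..) U_0} to manufacture a right-hand side $\gtrsim\varepsilon I_\ell[u_0,u_1]\,t^\ell$, which it then integrates against the factor $\lambda^{-2\omega}$. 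You instead keep $t^{2\ell}U_0$ as the source, feed in the conclusion \eqref{lower bound U0} of Lemma \ref{Lemma U0}, and integrate against the natural factor $\lambda^{-1}$; your proof of nonnegativity of $U_1$ (monotonicity of $U_1/\lambda$ plus $U_1(0)=\varepsilon\int u_1\varphi\,\mathrm{d}x\geqslant 0$) also replaces the paper's auxiliary functional $F_1$. What each buys: your version is shorter, needs only $\lambda>0$ and the asymptotics \eqref{asymptotic behavior lambda}, and dispenses entirely with $c_0$, $\omega$, $h_1$, $h_2$; the paper's version is self-contained in the sense that it uses only the nonnegativity of $U_0$ rather than the quantitative lower bound \eqref{lower bound U0} (which is immaterial here, since Lemma \ref{Lemma U0} precedes Lemma \ref{Lemma U1}). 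Two small points to tidy up: restricting the integral $\int_{2T_0}^t s^\ell/\lambda(s)\,\mathrm{d}s$ to $[t/2,t]$ requires $t\geqslant 4T_0$, so you should either start the integration at $T_0$ (where \eqref{lower bound U0} can be arranged to hold) or relabel the constant, exactly as the paper implicitly does; and the justification that $U_1$ is locally absolutely continuous, which you correctly flag, follows from your Volterra identity since every term on its left- and right-hand sides other than $U_1(t)$ is an integral over $[0,t]$ of a locally integrable function.
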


\begin{proof} 
Let us get started by showing that $U_1(t)\geqslant 0$ for any $t\in [0,T)$. According to this purpose, we introduce the further auxiliary functional  
\begin{align*}
F_1(t)\doteq \int_{\mathbb{R}^n} \partial_t u(t,x) \varphi(x) \, \mathrm{d}x \quad \mbox{for} \  t\in [0,T).
\end{align*} Since $U_1=\lambda F_1$ and $\lambda$ is a positive function, if we show that $F_1$ is nonnegative, then, it follows the nonnegativity of $U_1$ as well. Choosing $\varphi$ as test function in \eqref{Defn_Eq_2_Energy_Solution} (this is possible, due to the support condition for $u$ as explained in the proof of Lemma \ref{Lemma U0}), 
 we arrive at
\begin{align*}
 \int_{\mathbb{R}^n} \partial_t u(t,x) \varphi(x) \, \mathrm{d}x & =  \int_0^t \int_{\mathbb{R}^n} s^{2\ell} u(s,x) \Delta\varphi(x) \, \mathrm{d}x\, \mathrm{d}s+\varepsilon \int_{\mathbb{R}^n}u_1(x) \varphi(x) \, \mathrm{d}x+ \int_0^t \int_{\mathbb{R}^n} \left(|\partial_t u(s,x)|^p+|u(s,x)|^q\right)\varphi(x) \, \mathrm{d}x\, \mathrm{d}s \\
 & = \int_0^t  \frac{s^{2\ell}}{ \lambda(s)} U_0(s) \, \mathrm{d}s+\varepsilon \int_{\mathbb{R}^n}u_1(x) \varphi(x) \, \mathrm{d}x+ \int_0^t \int_{\mathbb{R}^n} \left(|\partial_t u(s,x)|^p+|u(s,x)|^q\right)\varphi(x) \, \mathrm{d}x\, \mathrm{d}s.
\end{align*} We remark that \eqref{intermediate estimate U0} implies that $U_0(t)$ is nonnegative for $t \in [0,T)$. Therefore, due to the fact that the second data and the nonlinearity are nonnegative, from the previous identity, it follows that $F_1$ is nonnegative. We prove now the lower bound estimate \eqref{lower bound U1}.
From \eqref{Def_U0} and \eqref{Def_U1}, it follows the relations
\begin{align}\label{relation U0 U1}
U_1(t) = U_0'(t)-\frac{\lambda'(t)}{\lambda(t)}U_0(t).
\end{align} 
Therefore, from \eqref{ODE_U0} we obtain
\begin{align} \label{control U1 + (..) U_0}
U_1(t) - \frac{\lambda'(t)}{\lambda(t)} U_0(t)= \varepsilon I_{\ell}[u_0,u_1] +  \int_0^t \int_{\mathbb{R}^n} \big(|\partial_t u(s,x)|^p+|u(s,x)|^q\big) \Psi(s,x) \, \mathrm{d}x \, \mathrm{d}s 
\end{align} 
for any $t\in(0,T)$. Differentiating the last identity with respect to $t$, we have
\begin{align*}
U_1'(t)- \frac{\lambda'(t)}{\lambda(t)} U'_0(t) +\left(- \frac{\lambda''(t)}{\lambda(t)} + \left(\frac{\lambda'(t)}{\lambda(t)}\right)^2 \right) U_0(t)  = \int_{\mathbb{R}^n} \big(|\partial_t u(t,x)|^p+|u(t,x)|^q\big) \Psi(t,x) \, \mathrm{d}x 
\end{align*} 
for any $t\in(0,T)$. The employment of \eqref{relation U0 U1} and \eqref{ODE_Lambda} in the last equation yields
\begin{align*}
\int_{\mathbb{R}^n} \big(|\partial_t u(t,x)|^p+|u(t,x)|^q\big) \Psi(t,x) \, \mathrm{d}x 
& = U_1'(t)-\frac{\lambda'(t)}{\lambda(t)}U_1(t)-\frac{\lambda''(t)}{\lambda(t)}U_0(t)\\
&   = U_1'(t)-\frac{\lambda'(t)}{\lambda(t)}U_1(t)-t^{2\ell}U_0(t).
\end{align*} Using again the nonnegativity of the nonlinearity, for a fixed parameter $\omega$ 
 we may rewrite
\begin{align}
0 & \leqslant U_1'(t) - \frac{\lambda'(t)}{\lambda(t)} U_1(t)-t^{2\ell} U_0(t) \notag \\
&=U_1'(t)-2\omega\frac{\lambda'(t)}{\lambda(t)}U_1(t)-h_1(t)\left(U_1(t)-\frac{\lambda'(t)}{\lambda(t)}U_0(t)\right)-h_2(t)U_0(t),
 \label{ODI mix U1 U0}
\end{align}
 where the functions $h_1,h_2$ are defined as follows:
\begin{align*}
h_1(t) & \doteq \frac{\lambda'(t)}{\lambda(t)}(1-2\omega),  \quad  
h_2(t)  \doteq t^{2\ell}+\left(\frac{\lambda'(t)}{\lambda(t)}\right)^2(1-2\omega). 
\end{align*}
By using \eqref{lambda_prop_01} and \eqref{lambda_prop_02} and the choosing  $\omega \in \left(\frac12, \frac12+\frac {1}{2c_0^2}\right)$, we obtain that 
\begin{align}\label{h1,h2 low bou est}
h_1(t) & \geqslant \frac{2\omega-1}{c_0} \, t^\ell  \geqslant 0 \quad \mbox{and} \quad
h_2(t)  \geqslant (1+c_0^2(1-2\omega))  \, t^{2\ell} \geqslant 0
\end{align}
for any $t\geqslant 1$.


Therefore, combining 
\eqref{control U1 + (..) U_0}, \eqref{ODI mix U1 U0} and \eqref{h1,h2 low bou est}, we get
\begin{align*}
U_1'(t)-2\omega\frac{\lambda'(t)}{\lambda(t)}U_1(t) & \geqslant h_1(t)\left(U_1(t)-\frac{\lambda'(t)}{\lambda(t)}U_0(t)\right)+h_2(t)U_0(t) \\
 & \geqslant h_1(t)\left(U_1(t)-\frac{\lambda'(t)}{\lambda(t)}U_0(t)\right) \\
 & \gtrsim   \varepsilon I_{\ell}[u_0,u_1] \, t^\ell   
\end{align*} for $t\in (1,T)$. We remark that in the second step of the previous chain of inequalities the nonnegativity of the functional $U_0$ is employed. Multiplying the last estimate by $ (\lambda(t))^{-2\omega}$ and integrating over $[1,t]$, we arrive at
\begin{align} \label{est inter U1}
\frac{U_1(t)}{\lambda^{2\omega}(t)} -\frac{U_1(1)}{\lambda^{2\omega}(1)}  \gtrsim  \varepsilon  I_{\ell}[u_0,u_1]\int_{1}^t\frac{s^{\ell}}{\lambda^{2\omega}(s)}\,\mathrm{d}s.
\end{align} 
Employing the nonnegativity of the functional $U_1$, that we proved at the really beginning of this proof, we may neglect the second term on the left-hand side of \eqref{est inter U1}.
Next, using again the asymptotic behavior of $\lambda$ given in \eqref{asymptotic behavior lambda} as in the proof of Lemma \ref{Lemma U0}, we derive the lower bound estimate for $U_1$. From \eqref{est inter U1} it follows 
\begin{align*}
 U_1(t) &\gtrsim \varepsilon I_{\ell}[u_0,u_1]\lambda^{2\omega}(t)\int_{1}^t\frac{s^{\ell}}{\lambda^{2\omega}(s)}\,\mathrm{d}s\\
 &\gtrsim\varepsilon I_{\ell}[u_0,u_1]\,t^{-\ell\omega}\mathrm{e}^{-2\omega \phi_\ell(t)}\int_{T_0}^ts^{\ell+\ell\omega}\mathrm{e}^{2\omega\phi_\ell(s)}\,\mathrm{d}s\\
 &\gtrsim\varepsilon I_{\ell}[u_0,u_1]\, \mathrm{e}^{-2\omega \phi_\ell(t)}\int_{t/2}^ts^{\ell}\mathrm{e}^{2\omega\phi_\ell(s)}\,\mathrm{d}s \\
    &\gtrsim \varepsilon I_{\ell}[u_0,u_1]\left(1-\mathrm{e}^{-2\omega (1-2^{-(\ell+1)})\phi_\ell(t)}\right)\\
 &\gtrsim \varepsilon I_{\ell}[u_0,u_1]
\end{align*} 
for any $t \in [2T_0,T)$. Note that in the previous steps we may assume $T_0>1$ without loss of generality.  The proof is complete.

\end{proof}

\subsection{Derivation of the iteration frame} \label{Subsection der iter frame}

%

Let us choose a bump function which is equal to $1$ on the light-cone $\{(s,x)\in[0,t]\times\mb{R}^n:|x|\leqslant R+\phi_\ell(s) \}$. Then, from \eqref{Defn_Eq_Energy_SOlution} we obtain
\begin{align*}
\int_{\mb{R}^n}\partial_tu(t,x)\,\mathrm{d}x=\varepsilon\int_{\mb{R}^n}u_1(x)\,\mathrm{d}x+\int_0^t\int_{\mb{R}^n}\big(|\partial_tu(s,x)|^p+|u(s,x)|^q\big)\,\mathrm{d}x\,\mathrm{d}s,
\end{align*}
which leads to
\begin{align}
U(t)&=U(0)+U'(0)t+\int_0^t\int_0^s\int_{\mb{R}^n}\big(|\partial_tu(\tau,x)|^p+|u(\tau,x)|^q\big)\,\mathrm{d}x\,\mathrm{d}\tau\,\mathrm{d}s \label{Eq_U}\\
&\geqslant \int_0^t\int_0^s\int_{\mb{R}^n}\big(|\partial_tu(\tau,x)|^p+|u(\tau,x)|^q\big)\,\mathrm{d}x\,\mathrm{d}\tau\,\mathrm{d}s. \label{Eq_Pre_Frame}
\end{align}
Additionally, from nonnegativity of $u_0,u_1$ it follows that $U(t)\geqslant0$ for any $t\in (0,T)$.

 From the paper \cite{HWY17}, we know 
 that
\begin{align*}
\left(\int_{B_{R+\phi_\ell(\tau)}}\Psi^{\frac{p}{p-1}}(\tau,x)\,\mathrm{d}x\right)^{p-1}\lesssim \tau^{-\frac{\ell p}{2}}(R+\phi_\ell(\tau))^{(n-1)(p-1)-\frac{n-1}{2}p},
\end{align*} where hereafter the unexpressed multiplicative constants may depend on $n,p,R,\ell,u_0,u_1$ but are independent of $\varepsilon$.
By using H\"older's inequality and Lemma \ref{Lemma U1}, we obtain
\begin{align}\label{Est_Inte_Dtu}
\int_{\mb{R}^n}|\partial_tu(\tau,x)|^p\,\mathrm{d}x&\geqslant |U_1(\tau)|^p\left(\int_{B_{R+\phi_\ell(\tau)}}\Psi^{\frac{p}{p-1}}(\tau,x)\,\mathrm{d}x\right)^{-(p-1)}\notag\\
&\gtrsim \varepsilon^p\tau^{\frac{\ell p}{2}}(R+\phi_\ell(\tau))^{(n-1)(1-\frac{p}{2})}
\end{align}
for any $t\in (2T_0,T)$. Therefore, if we combine \eqref{Eq_Pre_Frame} and \eqref{Est_Inte_Dtu}, we derive a first lower bound for $U$, namely,
\begin{align}\label{First_Lower_Bound_1}
U(t)& \gtrsim \varepsilon^p(R+\phi_\ell(t))^{-(n-1)\frac{p}{2}}\int_{2T_0}^t\int_{2T_0}^s\tau^{\frac{\ell p}{2}}(R+\phi_{\ell}(\tau))^{n-1}\,\mathrm{d}\tau\,\mathrm{d}s\notag\\
&  \gtrsim \varepsilon^p (1+t)^{-(\ell+1)(n-1)\frac{p}{2}}\int_{2T_0}^t\int_{2T_0}^s(\tau-2T_0)^{\frac{\ell p}{2}+(\ell+1)(n-1)}\,\mathrm{d}\tau\,\mathrm{d}s\notag\\
&\geqslant K\varepsilon^p  (1+t)^{-(\ell+1)(n-1)\frac{p}{2}} (t-2T_0)^{\frac{\ell p}{2}+(\ell+1)(n-1)+2}
\end{align}
for any $t\in (2T_0,T)$, where $K$ is a suitable positive constant.


 Eventually, in order to construct an iteration frame, we apply H\"older's inequality  together with the property of finite speed of propagation. More precisely, we  find
\begin{align}
U(t) &\gtrsim \int_0^t\int_0^s(R+\phi_{\ell}(\tau))^{-n(q-1)}(U(\tau))^q\,\mathrm{d}\tau\,\mathrm{d}s \notag \\
&\geqslant C \int_0^t\int_0^s(1+\tau)^{-(\ell+1)n(q-1)}(U(\tau))^q\,\mathrm{d}\tau\,\mathrm{d}s 
\label{Iteration_Frame}
\end{align}
for any $t\in (0,T)$ and for a suitable positive constant $C$,  where we neglected the influence of the nonlinearity of derivative type in \eqref{Eq_Pre_Frame}. In the next section, we will employ \eqref{Iteration_Frame} to get iteratively a sequence of lower bound estimates for the functional $U(t)$ starting from \eqref{First_Lower_Bound_1}.

\begin{remark} \label{Rem lb} Repeating similar computations to those we made to get \eqref{First_Lower_Bound_1} (in particular, by using the lower bound estimate for $U_0$ from Lemma \ref{Lemma U0} instead of the lower bound estimate for $U_1$ from Lemma \ref{Lemma U1}), we obtain
\begin{align}\label{First_Lower_Bound_2}
U(t)&\gtrsim \varepsilon^q(R+\phi_{\ell}(t))^{-\frac{(n-1)q}{2}}\int_{2T_0}^t\int_{2T_0}^s\tau^{-\frac{\ell q}{2}}(R+\phi_\ell(\tau))^{n-1}\,\mathrm{d}\tau\,\mathrm{d}s\notag\\
&\geqslant \widetilde{K}\varepsilon^q(1+t)^{-(n-1)(\ell+1)\frac{q}{2}-\frac{\ell q}{2}}(t-2T_0)^{(n-1)(\ell+1)+2}
\end{align} for any $t\in (2T_0,T)$, where $\widetilde{K}$ is a suitable positive constant.
By using either \eqref{First_Lower_Bound_1} or \eqref{First_Lower_Bound_2}, we employed lower bound estimates for the nonlinearity of derivative type or for the power nonlinearity to get a first estimate from below for the functional $U$. 

A further lower bound bound estimate for $U$ can be obtained by assuming $u_1$ nontrivial (so that, $\int_{\mathbb{R}^n}u_1(x) \, \mathrm{d}x>0$). Under this additional assumption, from \eqref{Eq_U} we derive
\begin{align}\label{First_Lower_Bound_3}
U(t)\geqslant \widetilde{K}_0 \varepsilon t,
\end{align} for any $t\in (0,T)$, where the positive multiplicative constant $\widetilde{K}_0$ depends on $u_1$.
\end{remark}

\subsection{Iteration argument} \label{Subsection iter arg}

In this section, we are going to prove a sequence of lower bound estimates for the time-dependent functional $U$. In fact, we will prove that
\begin{align} \label{U(t) low boun j}
U(t)\geqslant C_j (1+t)^{-\alpha_j}(t-2T_0)^{\beta_j}
\end{align} for any $t\in (2T_0,T)$ and any $j\in\mathbb{N}$, where $\{\alpha_j\}_{j\in\mathbb{N}}$, $\{\beta_j\}_{j\in\mathbb{N}}$ and $\{C_j\}_{j\in\mathbb{N}}$ are sequences of nonnegative real numbers that will be determined recursively in the iteration step.

 Clearly, \eqref{U(t) low boun j} holds true for $j=0$ thanks to \eqref{First_Lower_Bound_1}, provided that $$C_0\doteq K\varepsilon^p, \quad \alpha_0\doteq (\ell+1)(n-1) \tfrac{ p}{2}, \quad \beta_0=\tfrac{\ell p}{2}+(\ell+1)(n-1)+2.$$
 
In order to use an inductive argument, it remains to show the validity of the inductive step: we assume the validity of \eqref{U(t) low boun j} for some $j\in \mathbb{N}$ and we have to prove its validity for $j+1$ too. Plugging \eqref{U(t) low boun j} in \eqref{Iteration_Frame}, we get
\begin{align*}
U(t) & \geqslant C C_j^q \int_{2T_0}^t\int_{2T_0}^s(1+\tau)^{-(\ell+1)n(q-1)-\alpha_j q}(\tau-2T_0)^{\beta_j q}\,\mathrm{d}\tau\,\mathrm{d}s \\
 & \geqslant C C_j^q (1+t)^{-(\ell+1)n(q-1)-\alpha_j q} \int_{2T_0}^t\int_{2T_0}^s (\tau-2T_0)^{\beta_j q}\,\mathrm{d}\tau\,\mathrm{d}s \\
 & \geqslant C C_j^q (\beta_j q+1)^{-1} (\beta_j q+2)^{-1} (1+t)^{-(\ell+1)n(q-1)-\alpha_j q} (t-2T_0)^{\beta_j q+2}.
\end{align*} Thus, we proved \eqref{U(t) low boun j} for $j+1$ provided that
\begin{align}
C_{j+1} & \doteq   C C_j^q (\beta_j q+1)^{-1} (\beta_j q+2)^{-1}, \label{def Cj} \\
\alpha_{j+1} & \doteq \underbrace{(\ell+1)n(q-1)}_{\doteq a}+\alpha_j q, \quad \beta_{j+1}\doteq \beta_j q+2. \label{def rec alpha j, beta j}
\end{align} In particular, from \eqref{def rec alpha j, beta j} we have
\begin{align}
\alpha_j &= a+q \alpha_{j-1} = \cdots = a \sum_{k=0}^{j-1} q^k +\alpha_0 q^j = \left(\tfrac{a}{q-1}+\alpha_0\right)q^j-\tfrac{a}{q-1}, \label{def alpha j}\\
\beta_j &= 2+q \beta_{j-1} = \cdots = 2 \sum_{k=0}^{j-1} q^k +\beta_0 q^j = \left(\tfrac{2}{q-1}+\beta_0\right)q^j-\tfrac{2}{q-1}. \label{def beta j}
\end{align} Therefore, combining \eqref{def rec alpha j, beta j} and \eqref{def beta j}, it results
\begin{align*}
\beta_j= 2+q \beta_{j-1}< \left(\tfrac{2}{q-1}+\beta_0\right)q^j.
\end{align*} Consequently, from \eqref{def Cj} it follows
\begin{align*}
C_{j} &  \geqslant   C C_{j-1}^q (\beta_{j-1} q+2)^{-2} \geqslant  \underbrace{C  \left(\tfrac{2}{q-1}+\beta_0\right)^{-2}}_{\doteq \widetilde{C}} C_{j-1}^q q^{-2j}.
\end{align*} Applying the logarithmic function to both sides of the last inequality and using in an iterative way the resulting relation, we obtain
\begin{align*}
\log C_j &\geqslant q\log C_{j-1}-2j \log q + \log \widetilde{C} \\
& \geqslant q^2\log C_{j-2}-2(j+(j-1)q)\log q+(1+q)\log \widetilde{C} \\
& \geqslant \cdots \geqslant q^{j}\log C_0-2\log q \,\sum_{k=0}^{j-1} (j-k)q^{k}+\log \widetilde{C} \, \sum_{k=0}^{j-1}q^k.
\end{align*}
Using the  formula
\begin{align*} 
\sum_{k=0}^{j-1}(j-k)q^{k}=\frac{1}{q-1}\bigg(\frac{q^{j+1}-q}{q-1}-j\bigg) 
\end{align*}
(whose validity can be proved through an inductive argument) we find
\begin{align*}
\log C_j &  \geqslant  q^{j}\log C_0- \frac{2\log q}{q-1}\bigg(\frac{q^{j+1}-q}{q-1}-j\bigg) +\frac{q^j-1 }{q-1}  \log \widetilde{C} \\ 
& = q^{j}\left(\log C_0 -\frac{2 q\log q}{(q-1)^2}+ \frac{ \log \widetilde{C} }{q-1}\right)+\frac{2 j\log q}{q-1}+\frac{2 q\log q}{(q-1)^2} -\frac{ \log \widetilde{C} }{q-1}.
\end{align*} If we denote by $j_0=j_0(n,p,q,\ell)\in \mathbb{N}$ the smallest integer greater than $\frac{\log\widetilde{C}}{2\log q}-\frac{q}{q-1}$, then, for any $j\geqslant j_0$ it holds
\begin{align}
\log C_j \geqslant  q^{j}\left(\log C_0 -\frac{2 q\log q}{(q-1)^2}+ \frac{ \log \widetilde{C} }{q-1}\right) = q^{j}\log \left( K q^{-(2 q)/(q-1)^2} \widetilde{C}^{1/(q-1)} \varepsilon^p\right) = q^{j}\log \left( D \varepsilon^p\right),
\label{lb log Cj} 
\end{align} where $D\doteq K q^{-(2 q)/(q-1)^2} \widetilde{C}^{1/(q-1)}$. Combining \eqref{U(t) low boun j}, \eqref{def alpha j}, \eqref{def beta j}  and \eqref{lb log Cj}, for $j\geqslant j_0$ and $t\geqslant 2T_0$ we arrive at
\begin{align*}
U(t) & \geqslant  \exp \left(q^j\log(D\varepsilon^p)\right) (1+t)^{-\alpha_j}(t-2T_0)^{\beta_j} 
\\ &=\exp\left(q^j\left(\log \left( D \varepsilon^p\right)-\left(\tfrac{a}{q-1}+\alpha_0\right)\log (1+t)+\left(\tfrac{2}{q-1}+\beta_0\right)\log (t-2T_0)\right)\right) (1+t)^{\frac{a}{q-1}} (t-2T_0)^{-\frac{2}{q-1}}.
\end{align*}
For $t\geqslant T_1\doteq \max\{1,4T_0\}$, we can estimate $\log(1+t)\leqslant \log (2t)$ and $\log(t-2T_0)\geqslant \log(t/2)$, so for $j\geqslant j_0$ it results
\begin{align}
U(t) & \geqslant \exp\left(q^j\left(\log \left( D \varepsilon^p\right)+\left(\tfrac{2-a}{q-1}+\beta_0-	\alpha_0\right)\log t-\left(\tfrac{a+2}{q-1}+\alpha_0+\beta_0\right)\log 2\right)\right) (1+t)^{\frac{a}{q-1}} (t-2T_0)^{-\frac{2}{q-1}} \notag \\
 & = \exp\left(q^j\left(\log \left( 2^{-(\alpha_0+\beta_0)-\frac{a+2}{q-1}} D \varepsilon^p t^{\frac{\theta(n,\ell,p,q)}{q-1}}\right)\right)\right) (1+t)^{\frac{a}{q-1}} (t-2T_0)^{-\frac{2}{q-1}}, \label{final lb U subcrit}
\end{align} where for the exponent of $t$ in the last equality we used 
\begin{align}
\tfrac{2-a}{q-1}+\beta_0-\alpha_0 & = \tfrac{2}{q-1} -(\ell+1)n +(\ell+1)(n-1)+\tfrac{\ell p}{2}+2-(\ell+1)(n-1)\tfrac{p}{2}\notag \\
& = \tfrac{2}{q-1} -((\ell+1)n -1-2\ell) \tfrac p2 -\ell +1 \notag \\
& = \tfrac{2}{q-1} -\tfrac 12 \big[ ((\ell+1)n -1) p -2\ell (p-1) -2 \big]= \tfrac{\theta(n,\ell,p,q)}{q-1}, \label{exponent t subcrit}
\end{align} where  $\theta(n,\ell,p,q)$ is defined in \eqref{def theta}. Note that the blow-up condition on $p,q$ in \eqref{blow up range p,q} is equivalent to require $\theta(n,\ell,p,q)>0$.
Let us choose $\varepsilon_0>0$ satisfying
\begin{align*}
\varepsilon_0^{-\frac{p(q-1)}{\theta(n,\ell,p,q)}}\geqslant 2^{-\frac{(\alpha_0+\beta_0)(q-1)+a+2}{\theta(n,\ell,p,q)}} D^{\frac{q-1}{\theta(n,\ell,p,q)}}T_1.
\end{align*} So, for any $\varepsilon\in(0,\varepsilon_0]$ and for $t\geqslant 2^{\frac{(\alpha_0+\beta_0)(q-1)+a+2}{\theta(n,\ell,p,q)}} D^{-\frac{q-1}{\theta(n,\ell,p,q)}}\varepsilon^{-\frac{p(q-1)}{\theta(n,\ell,p,q)}}$ it results
\begin{align*}
t\geqslant T_1 \qquad \mbox{and} \qquad 2^{-(\alpha_0+\beta_0)-\frac{a+2}{q-1}} D \varepsilon^p t^{\frac{\theta(n,\ell,p,q)}{q-1}}>1,
\end{align*} hence, letting $j\to \infty$ in \eqref{final lb U subcrit} it turns out that $U(t)$ blows up in finite time. Therefore, we proved the blowing-up of $U$ for any $\varepsilon\in(0,\varepsilon_0]$ whenever \eqref{blow up range p,q} holds and, besides, as byproduct we established the upper bound estimate for the lifespan $T(\varepsilon)\lesssim\varepsilon^{-\frac{p(q-1)}{\theta(n,\ell,p,q)}} $.

\section{Lifespan estimates for the power nonlinearity} \label{Section |u|^q}

In Section \ref{Section proof main thm}, we proved Theorem \ref{Theorem} by using an iteration argument, whose first lower bound estimate for $U$ is given by \eqref{First_Lower_Bound_1}. In particular, we combined the influence of the nonlinearity of derivative type $|\partial_t u|^p$, provided by the lower bound estimate \eqref{First_Lower_Bound_1}, with the iteration frame in \eqref{Iteration_Frame}, which has been derived thanks to the presence of the power nonlinearity $|u|^q$ on the right-hand side of our semilinear model.

On the other hand, as we pointed out in Remark \ref{Rem lb}, we know also other two lower bound estimates for $U$, given by \eqref{First_Lower_Bound_2} and \eqref{First_Lower_Bound_3}, respectively (let us recall that, in order to derive \eqref{First_Lower_Bound_3}, we need to require $u_1$ nonnegative and nontrivial). However, if we do employ one among these first lower bounds for $U$, somehow we neglect the effect $|\partial_t u|^p$ on the equation, obtaining the same blow-up result for the Tricomi equation with power nonlinearity $|u|^q$ from \cite{HWY17,LinTu19} in the subcritical case.

By using \eqref{First_Lower_Bound_2} in place of \eqref{First_Lower_Bound_1} and applying the machinery from Section \ref{Subsection iter arg}, we find the following result which coincides with Theorem 1.2 in \cite{LinTu19}.

\begin{corollary}  Let us consider $n\geqslant 2$ and $p,q>1$ satisfying $$1<q<p_0(n;\ell),$$ where $p_0(n;\ell)$ is the positive root of the quadratic equation in \eqref{exponent p0 He-Witt-Yin}.
 Let us assume that $u_0\in H^1(\mathbb{R}^n)$ and $u_1\in L^2(\mathbb{R}^n)$ are nonnegative, not both trivial and compactly supported functions with supports contained in $B_R$ for some $R>0$. Let $$u\in \mathcal{C}([0,T), H^1(\mathbb{R}^n)) \cap \mathcal{C}^1([0,T), L^2(\mathbb{R}^n))\cap L^q_{\mathrm{loc}}((0,T)\times \mathbb{R}^n) \ \ \mbox{such that} \ \ \partial_t u \in  L^p_{\mathrm{loc}}((0,T)\times \mathbb{R}^n) $$ be an energy solution to \eqref{Semi_Tri_Combined} according to Definition \ref{Defn_Energy_Solution} with lifespan $T=T(\varepsilon)$.

Then, there exists a positive constant $\varepsilon_0=\varepsilon_0(n,\ell,q,u_0,u_1,R)$ such that for any $\varepsilon\in (0,\varepsilon_0]$ the energy solution $u$ blows up in finite time. Furthermore, the upper bound estimate for the lifespan
\begin{align} \label{lifespan estimate cor 1}
T(\varepsilon)\leqslant C \varepsilon^{-\frac{q(q-1)}{\gamma(n,\ell,q)}}
\end{align}
holds, where the positive constant $C$ is independent of $	\varepsilon$ and
\begin{align} \label{def gamma}
\gamma(n,\ell,q)\doteq (\ell+1)+\tfrac 12 \big((\ell+1)n+1-2\ell\big)q - \tfrac 12 \big((\ell+1)n-1\big) q^2 .
\end{align}
\end{corollary}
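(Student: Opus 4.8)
The plan is to re-run the iteration scheme of Section~\ref{Subsection iter arg} verbatim, seeding it with the first lower bound \eqref{First_Lower_Bound_2} in place of \eqref{First_Lower_Bound_1}. Concretely, I would initialize the recursion \eqref{U(t) low boun j} with
\[
C_0\doteq \widetilde K\varepsilon^q,\qquad \alpha_0\doteq (n-1)(\ell+1)\tfrac q2+\tfrac{\ell q}{2},\qquad \beta_0\doteq (n-1)(\ell+1)+2,
\]
read off directly from \eqref{First_Lower_Bound_2}. Since the iteration frame \eqref{Iteration_Frame} is produced by the power nonlinearity $|u|^q$ alone, it is unchanged; hence the inductive step and the recursive definitions \eqref{def Cj}--\eqref{def rec alpha j, beta j} carry over word for word, and with them the closed forms \eqref{def alpha j}--\eqref{def beta j} for $\alpha_j,\beta_j$ and the lower bound \eqref{lb log Cj} for $\log C_j$ (the latter with the new value of $C_0$).

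The only genuinely new computation is the coefficient of $\log t$ that governs the blow-up, namely $\tfrac{2-a}{q-1}+\beta_0-\alpha_0$ with $a=(\ell+1)n(q-1)$. Substituting the new $\alpha_0,\beta_0$ and simplifying---here the identity $-(\ell+1)n+(n-1)(\ell+1)=-(\ell+1)$ collapses the explicit $n$-dependence---I would verify that
\[
\tfrac{2-a}{q-1}+\beta_0-\alpha_0=\tfrac{\gamma(n,\ell,q)}{q-1},
\]
with $\gamma$ as in \eqref{def gamma}. This is the exact analogue of \eqref{exponent t subcrit}, with $\theta(n,\ell,p,q)$ replaced by $\gamma(n,\ell,q)$ and with the power of $\varepsilon$ carried by $C_0$ now equal to $q$ rather than $p$.

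The blow-up criterion is the positivity of this exponent, i.e.\ $\gamma(n,\ell,q)>0$. Clearing the factor $\tfrac12$, the equation $\gamma(n,\ell,q)=0$ becomes
\[
((\ell+1)n-1)q^2-((\ell+1)n+1-2\ell)q-2(\ell+1)=0,
\]
which is precisely the defining quadratic \eqref{exponent p0 He-Witt-Yin} for $p_0(n;\ell)$ with the unknown renamed $q$; consequently $\gamma(n,\ell,q)>0$ is equivalent to $1<q<p_0(n;\ell)$, exactly the hypothesis. With $\gamma>0$ secured, the argument closes as in \eqref{final lb U subcrit}: choosing $\varepsilon_0$ appropriately, for $\varepsilon\in(0,\varepsilon_0]$ and $t$ beyond a threshold of size $\sim\varepsilon^{-q(q-1)/\gamma(n,\ell,q)}$ the factor $\exp\!\big(q^j(\cdots)\big)$ diverges as $j\to\infty$, forcing $U(t)$ to become infinite and yielding the lifespan bound \eqref{lifespan estimate cor 1}.

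I expect no serious obstacle, since every estimate is inherited from Section~\ref{Subsection iter arg}: the entire content reduces to the algebraic verification that the exponent of $t$ equals $\gamma/(q-1)$ and to the observation that $\gamma$ vanishes exactly on the $p_0$-quadratic. The one point deserving care is the bookkeeping of the competing powers of $t$, since $\alpha_0$ now carries the extra term $\tfrac{\ell q}{2}$ (absent in the $|\partial_t u|^p$-driven case of Theorem~\ref{Theorem}); this term must be tracked correctly through the cancellation above to land on $\gamma$ rather than $\theta$.
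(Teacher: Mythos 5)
Your proposal is correct and follows exactly the route the paper indicates for this corollary, namely seeding the iteration of Section \ref{Subsection iter arg} with \eqref{First_Lower_Bound_2} instead of \eqref{First_Lower_Bound_1}; I checked the key algebra and indeed $\tfrac{2-a}{q-1}+\beta_0-\alpha_0=\tfrac{\gamma(n,\ell,q)}{q-1}$ with the new $\alpha_0,\beta_0$, and $\gamma(n,\ell,q)=0$ is the $p_0$-quadratic \eqref{exponent p0 He-Witt-Yin} in the variable $q$. The replacement of the $\varepsilon$-power $p$ by $q$ in the lifespan bound, coming from $C_0=\widetilde K\varepsilon^q$, is also handled correctly.
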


Using \eqref{First_Lower_Bound_3} rather than \eqref{First_Lower_Bound_2}, we can prove a blow-up result for \eqref{Semi_Tri_Combined} for $1<q<p_{\mathrm{Kat}}((\ell+1)n)$, where $p_{\mathrm{Kat}}(n)=\frac{n+1}{n-1}$. 
This result provides actually an improvement of the upper bound for $q$ in the blow-up range only in the one dimensional case. More precisely, we get the following result, which is already know in the literature (see \cite[Remark 1.6]{HWY20}) for the power nonlinearity $|u|^q$.

\begin{corollary} \label{Cor n=1} Let us consider $n=1$ and $p,q>1$ satisfying $$1<q<1+\frac{2}{\ell}=p_{\mathrm{Kat}}(\ell+1).$$
 Let us assume that $u_0\in H^1(\mathbb{R})$ and $u_1\in L^2(\mathbb{R})$ are nonnegative and compactly supported functions with supports contained in $[-R,R]$ for some $R>0$. Additionally, we require that $$\int_{\mathbb{R}}u_1(x) \, \mathrm{d}x>0.$$ Let $u\in \mathcal{C}([0,T), H^1(\mathbb{R})) \cap \mathcal{C}^1([0,T), L^2(\mathbb{R}))\cap L^q_{\mathrm{loc}}((0,T)\times \mathbb{R})$ such that $ \partial_t u \in  L^p_{\mathrm{loc}}((0,T)\times \mathbb{R})$ be an energy solution to \eqref{Semi_Tri_Combined} according to Definition \ref{Defn_Energy_Solution} with lifespan $T=T(\varepsilon)$.

Then, there exists a positive constant $\varepsilon_0=\varepsilon_0(\ell,q,u_0,u_1,R)$ such that for any $\varepsilon\in (0,\varepsilon_0]$ the energy solution $u$ blows up in finite time. Furthermore, the upper bound estimate for the lifespan
\begin{align} \label{lifespan estimate cor n=1}
T(\varepsilon)\leqslant C \varepsilon^{-\left(\frac{q+1}{q-1}-(\ell+1)\right)^{-1}}
\end{align}
holds, where the positive constant $C$ is independent of $	\varepsilon$.
\end{corollary}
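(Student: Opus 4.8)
The plan is to re-run the iteration machinery of Section~\ref{Subsection iter arg} with the seed lower bound \eqref{First_Lower_Bound_3} in place of \eqref{First_Lower_Bound_1}, specializing everything to $n=1$. The extra hypothesis $\int_{\mathbb{R}} u_1\,\mathrm{d}x>0$ is precisely what yields \eqref{First_Lower_Bound_3}: in \eqref{Eq_U} the nonnegativity of $u_0$ gives $U(0)\geqslant 0$, the nonnegativity of the nonlinear terms kills the double time integral, and $U'(0)=\varepsilon\int_{\mathbb{R}}u_1\,\mathrm{d}x>0$, so that $U(t)\geqslant \widetilde{K}_0\varepsilon t$ on all of $(0,T)$. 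Crucially, this estimate holds already from $t=0$, so, unlike in the proof of Theorem~\ref{Theorem}, I can base every time integral at the origin and work with the cleaner ansatz
\begin{align*}
U(t)\geqslant C_j(1+t)^{-\alpha_j}t^{\beta_j}\qquad\text{for all } t\in(0,T),
\end{align*}
with seed values $\alpha_0=0$, $\beta_0=1$, $C_0=\widetilde{K}_0\varepsilon$.

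I would then insert this ansatz into the iteration frame \eqref{Iteration_Frame}, now with $n=1$ so that the constant $a=(\ell+1)n(q-1)$ reduces to $a=(\ell+1)(q-1)$. A two-fold time integration, based at the origin rather than at $2T_0$, reproduces verbatim the recursions \eqref{def rec alpha j, beta j} and \eqref{def Cj}; consequently the closed forms \eqref{def alpha j}, \eqref{def beta j} and the logarithmic lower bound \eqref{lb log Cj} for $\log C_j$ persist after substituting the new seed data. The only quantity that changes is the exponent of $t$ in the decisive factor, which here equals
\begin{align*}
\frac{2-a}{q-1}+\beta_0-\alpha_0=\frac{2}{q-1}-\ell,
\end{align*}
and a one-line algebraic check confirms $\frac{2}{q-1}-\ell=\frac{q+1}{q-1}-(\ell+1)$.

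Letting $j\to\infty$ as in Section~\ref{Subsection iter arg} then forces $U(t)$ to blow up as soon as $\varepsilon\,t^{\frac{2}{q-1}-\ell}$ exceeds a fixed threshold, which produces the lifespan bound $T(\varepsilon)\lesssim\varepsilon^{-(\frac{q+1}{q-1}-(\ell+1))^{-1}}$ of \eqref{lifespan estimate cor n=1}. This mechanism operates exactly when the above exponent of $t$ is positive, i.e. when $\frac{2}{q-1}>\ell$, which is the announced range $1<q<1+\frac{2}{\ell}$. I expect no genuine obstacle, since the argument is essentially a transcription of Section~\ref{Subsection iter arg}; the only point demanding care is the exponent bookkeeping, namely verifying that the seed $C_0\sim\varepsilon^{1}$ (rather than $\varepsilon^{p}$), together with $\alpha_0=0$ and $\beta_0=1$, propagates through \eqref{lb log Cj} to leave a single power $\varepsilon^{1}$ in the decisive factor, and hence the stated $\varepsilon$-exponent in the lifespan.
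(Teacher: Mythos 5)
Your proposal is correct and is exactly the argument the paper intends: the corollary is obtained by feeding the seed bound \eqref{First_Lower_Bound_3} (valid on all of $(0,T)$ thanks to $\int_{\mathbb{R}}u_1\,\mathrm{d}x>0$) into the iteration frame \eqref{Iteration_Frame} with $n=1$, and your exponent bookkeeping — $\alpha_0=0$, $\beta_0=1$, $C_0\sim\varepsilon$, giving the decisive exponent $\tfrac{2}{q-1}-\ell=\tfrac{q+1}{q-1}-(\ell+1)$ and the range $q<1+\tfrac{2}{\ell}$ — checks out.
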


Besides, the upper bound for the lifespan in \eqref{lifespan estimate cor 1} can be improved for $n=2$ for certain exponents $q$, provided that the integral of $u_1$ over the whole space is a positive quantity.

\begin{corollary} \label{Cor n=2} Let $n=2$ and let us consider $p,q>1$ satisfying $$1<q<\frac{2(\ell+1)}{2\ell+1}.$$
 Let us assume that $u_0\in H^1(\mathbb{R}^2)$ and $u_1\in L^2(\mathbb{R}^2)$ are nonnegative and compactly supported functions with supports contained in $B_R$ for some $R>0$. Additionally, we require that $$\int_{\mathbb{R}^2}u_1(x) \, \mathrm{d}x>0.$$ Let $u\in \mathcal{C}([0,T), H^1(\mathbb{R}^2)) \cap \mathcal{C}^1([0,T), L^2(\mathbb{R}^2))\cap L^q_{\mathrm{loc}}((0,T)\times \mathbb{R}^2)$ such that $\partial_t u \in  L^p_{\mathrm{loc}}((0,T)\times \mathbb{R}^2) $ be an energy solution to \eqref{Semi_Tri_Combined} according to Definition \ref{Defn_Energy_Solution} with lifespan $T=T(\varepsilon)$.

Then, there exists a positive constant $\varepsilon_0=\varepsilon_0(\ell,q,u_0,u_1,R)$ such that for any $\varepsilon\in (0,\varepsilon_0]$ the energy solution $u$ blows up in finite time. Furthermore, the upper bound estimate for the lifespan
\begin{align} \label{lifespan estimate cor 2}
T(\varepsilon)\leqslant C \varepsilon^{-\left(\frac{q+1}{q-1}-2(\ell+1)\right)^{-1}}
\end{align}
holds, where the positive constant $C$ is independent of $	\varepsilon$.
\end{corollary}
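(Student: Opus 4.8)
The plan is to mimic \emph{verbatim} the iteration scheme of Section \ref{Subsection iter arg}, changing only the seed of the iteration: instead of the first lower bound \eqref{First_Lower_Bound_1} (which injects the derivative nonlinearity through $U_1$), I would feed in the linear-in-time estimate \eqref{First_Lower_Bound_3}. Indeed, the hypothesis $\int_{\mathbb{R}^2} u_1 \, \mathrm{d}x>0$ forces $U'(0)=\varepsilon\int_{\mathbb{R}^2}u_1\,\mathrm{d}x>0$, so \eqref{Eq_U} together with the nonnegativity of $U(0)$ and of the nonlinear terms yields $U(t)\geqslant \widetilde{K}_0\,\varepsilon\, t$ on $(0,T)$, with $\widetilde{K}_0=\int_{\mathbb{R}^2}u_1\,\mathrm{d}x$. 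This is exactly the base case of the induction, and it already carries the factor $t^1$ that ultimately fixes the lifespan exponent.

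First I would set up the ansatz $U(t)\geqslant C_j(1+t)^{-\alpha_j}t^{\beta_j}$ for $t\in(0,T)$, with $C_0\doteq\widetilde{K}_0\varepsilon$, $\alpha_0\doteq 0$, $\beta_0\doteq 1$ (no $2T_0$-shift is needed here, since \eqref{First_Lower_Bound_3} holds down to $t=0$, so the double integrals may be started at $0$). Plugging this into the iteration frame \eqref{Iteration_Frame} with $n=2$, so that the running exponent is $a\doteq(\ell+1)n(q-1)=2(\ell+1)(q-1)$, and bounding $(1+\tau)^{-a-\alpha_j q}\geqslant(1+t)^{-a-\alpha_j q}$ for $\tau\leqslant t$ before integrating twice, I obtain the same recurrences as in Section \ref{Subsection iter arg}, namely $\alpha_{j+1}=a+q\alpha_j$, $\beta_{j+1}=q\beta_j+2$, and $C_{j+1}=CC_j^q\,(\beta_jq+1)^{-1}(\beta_jq+2)^{-1}$. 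Solving them as in \eqref{def alpha j}--\eqref{def beta j} gives $\alpha_j=2(\ell+1)(q^j-1)$ and $\beta_j=\tfrac{q+1}{q-1}q^j-\tfrac{2}{q-1}$, while the logarithmic estimate behind \eqref{lb log Cj} yields $\log C_j\geqslant q^j\log(D\varepsilon)$ for $j\geqslant j_0$, the only modification being that $\varepsilon^p$ is replaced by $\varepsilon$.

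The last step is the exponent bookkeeping: letting $j\to\infty$, the quantity governing blow-up is the coefficient of $\log t$ in the exponential, which here equals $\tfrac{2-a}{q-1}+\beta_0-\alpha_0=\tfrac{q+1}{q-1}-2(\ell+1)$. This is positive on the stated range (in fact on the slightly larger range $q<\tfrac{2\ell+3}{2\ell+1}$), so the threshold $2^{-(\cdots)}D\varepsilon\,t^{\,\frac{q+1}{q-1}-2(\ell+1)}>1$ forces $U$ to blow up in finite time and delivers the lifespan bound \eqref{lifespan estimate cor 2}. I do not expect any genuine obstacle in the iteration itself, as it is a replica of the one already carried out; the only substantive point is to justify the restriction $q<\tfrac{2(\ell+1)}{2\ell+1}$ as the \emph{improvement} window. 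For this I would compare the new lifespan exponent $\bigl(\tfrac{q+1}{q-1}-2(\ell+1)\bigr)^{-1}$ with the exponent $\tfrac{q(q-1)}{\gamma(2,\ell,q)}$ coming from \eqref{lifespan estimate cor 1}--\eqref{def gamma}: equating them reduces to the quadratic $(2\ell+1)q^2-(4\ell+3)q+2(\ell+1)=0$, whose discriminant is $1$ and whose roots are $q=1$ and $q=\tfrac{2(\ell+1)}{2\ell+1}$. A sign check at an interior point (the new exponent being the larger) then shows that the new bound is strictly sharper precisely for $1<q<\tfrac{2(\ell+1)}{2\ell+1}$, which is the claimed range.
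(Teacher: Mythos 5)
Your proposal is correct and is exactly the argument the paper intends: seed the iteration frame \eqref{Iteration_Frame} with \eqref{First_Lower_Bound_3} (so $C_0\propto\varepsilon$, $\alpha_0=0$, $\beta_0=1$, $a=2(\ell+1)(q-1)$) and rerun the machinery of Section \ref{Subsection iter arg}, which yields blow-up whenever $\frac{q+1}{q-1}-2(\ell+1)>0$ together with the lifespan bound \eqref{lifespan estimate cor 2}. Your verification that the stated range $1<q<\frac{2(\ell+1)}{2\ell+1}$ is precisely the window where this bound improves on \eqref{lifespan estimate cor 1} (via the quadratic $(2\ell+1)q^2-(4\ell+3)q+2(\ell+1)=0$ with roots $1$ and $\frac{2(\ell+1)}{2\ell+1}$) matches the remark following the corollary.
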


The improvement of the upper bound for the lifespan in the  low dimensional case $n=1$ and $n=2$ from Corollaries \ref{Cor n=1} - \ref{Cor n=2} has already been observed for the case of the semilinear wave equation (see \cite{Tak15} and   \cite{LT18Scatt,PalTak19} for the semilinear damped wave equation in the scattering producing case).

\begin{remark} For $n\geqslant 3$ we cannot improve the upper bound estimate in \eqref{lifespan estimate cor 1}. As we have already pointed out,  employing \eqref{First_Lower_Bound_3} rather than \eqref{First_Lower_Bound_2}, we get a blow-up result for \eqref{Semi_Tri_Combined} provided that $1<q<p_{\mathrm{Kat}}((\ell+1)n)$. Moreover, the next upper bound estimate for the lifespan can be proved:
\begin{align*}
T(\varepsilon)\leqslant C \varepsilon^{-\left(\frac{q+1}{q-1}-(\ell+1)n\right)^{-1}}.
\end{align*} It turns out that this upper bound for $T(\varepsilon)$ improves the one in \eqref{lifespan estimate cor 1} for $$q<\frac{2(\ell+1)}{(\ell+1)n-1}.$$ However, $2(\ell+1)/\big((\ell+1)n-1\big)<1$ for $n\geqslant 3$ and any $\ell>0$, therefore, it makes sense to talk about an improvement of the lifespan estimate in \eqref{lifespan estimate cor 1} just in space dimensions $n=1,2$.
\end{remark}

\section{Analysis of the obtained results} \label{Section Final Rem}

Let us define the following region: 
\begin{align*}
\Gamma(n,\ell)\doteq\left\{(p,q)\in (1,\infty)^2:\, \Big[\big((\ell+1)n-1\big)p-2\ell(p-1)-2\Big](q-1)<4 \right\}.
\end{align*}
According to our main result, we proved the blow-up of local in time energy solutions to \eqref{Semi_Tri_Combined} under suitable sign assumptions for the Cauchy data provided that $(p,q)\in\Gamma(n,\ell)$.  We observe that, given $n\geqslant 2$, $\{\Gamma(n,\ell)\}_{\ell\geqslant 0}$ is a family of sets decreasing by inclusion, so the smaller $\ell$ the wider $\Gamma(n,\ell)$.

We begin by pointing out that for $\ell=0$, namely, for the case of the classical semilinear wave equation with combined nonlinearity, our result coincides exactly with the sharp result obtained by \cite{HZ14,HWY16}.

We focus now on the case $n\geqslant 2$ (we will investigate separately later the case $n=1$). In this case, we remark that the range $\Gamma(n,\ell)$ is not fully contained in the strips $$\left\{(p,q)\in (1,\infty)^2:\, p<p_1(n;\ell)  \quad \mbox{or} \quad  q<p_0(n; \ell) \right\},$$ delimited by $p=p_1(n;\ell)$, $q=p_0(n;\ell)$, which intersect in $\mathrm{S}\doteq (p_1(n;\ell),p_0(n;\ell))$, see Fig \ref{Fig 1}.

In other words, thanks to the combined presence of a power nonlinearity and of a nonlinearity of derivative type on the right-hand side of \eqref{Semi_Tri_Combined}, we find an enlargement of the blow-up range in comparison to the ranges for the corresponding semilinear models with either a nonlinearity of power type or a nonlinearity of derivative type. Let us consider for example the special case $p=q$. The intersection of $\partial\Gamma(n,\ell)$ with the diagonal yields the point $(p_{\mathrm{diag}}(n;\ell),p_{\mathrm{diag}}(n;\ell))$, where $p_{\mathrm{diag}}(n;\ell)$ is the positive solution to the quadratic equation 
\begin{align}\label{p diag eq}
\big((\ell+1)n-1-2\ell\big)p^2-\big((\ell+1)n+1-4\ell\big)p-2(\ell+1)=0.
\end{align}  By straightforward computations it follows that $p_1(n;\ell)<p_0(n;\ell)<p_{\mathrm{diag}}(n;\ell)$ for any $\ell>0$ and $n\geqslant 2$. Therefore, already on the diagonal we are able to prove a blow-up result for \eqref{Semi_Tri_Combined} for pairs $(p,q)$ satisfying $p>p_1(n;\ell)$ and $q>p_0(n;\ell)$. Since for $\ell=0$ it holds $p_0(n;0)=p_{\mathrm{diag}}(n;0)=p_{\mathrm{Str}}(n)$, we notice that this is a completely new phenomenon in comparison to the case of the classical semilinear wave equation with the same kind of nonlinear term. 

\begin{remark} Let us remark that $p_{\mathrm{diag}}(n;\ell)$ belongs somehow to the family of \emph{Strauss-type} exponents. The Strauss exponent $p_{\mathrm{Str}}(n)$ is the greatest root of the quadratic equation
\begin{align}\label{Strauss eq}
(n-1)(p-1)^2+(n-3)(p-1)-4=0,
\end{align} and it is the critical exponent for the semilinear wave equation with power nonlinearity. 
We are interested here in quadratic equations of the following kind:
\begin{align}\label{Eq alpha beta}
\alpha (p-1)^2+\beta (p-1)-4=0	
\end{align} or, equivalently, 
\begin{align*}
\alpha (p-1)=\alpha -\beta +\frac{4-(\alpha-\beta)}{p} .
\end{align*}
 The previous relation emphasizes the importance of the quantities $\alpha$ and $\alpha-\beta$.
Over the last years, several papers have been devoted to the treatment of semilinear wave models with critical exponents that are translation shifts of $p_{\mathrm{Str}}(n)$ in the dimensional parameter (see \cite{DLR15,DabbLuc15,LTW17,IS17,PalRei18,Pal18odd,Pal18even,TL1709,PT18,DA20} for models with time-dependent coefficients and \cite{GKW19,DKS21} for models with space-dependent coefficients). Typically, these models present lower order terms with critical decay rates, beyond the principal part of the operator, which is the wave operator $\partial_t^2 -\Delta$, and in all cases the relation $\alpha-\beta=2$ is satisfied as in \eqref{Strauss eq}. If we consider a generalized Tricomi operator $\partial_t^2 -t^{2\ell}\Delta$ instead of the wave operator, the quadratic equation that reveals $p_0(n;\ell)$ is 
\begin{align} \label{p_0(n,l) eq modified}
((\ell+1)n-1)(p-1)^2+((\ell+1)n-3+2\ell)(p-1)-4=0.
\end{align} In the last equation, we notice a rescaling in $\alpha$ due to the shape of the light-cone. Moreover, $\alpha-\beta=2(1-\ell)$. If we rewrite \eqref{p diag eq} in the following way:
\begin{align} \label{p diag eq modified}
((\ell+1)n-1-2\ell)(p-1)^2+((\ell+1)n-3)(p-1)-4=0,
\end{align} we observe that $\alpha-\beta=2(1-\ell)$ also in this case, so that, $p_{\mathrm{diag}}(n;\ell)$ is obtained by $p_0(n;\ell)$ through a shift of magnitude $-2\ell$ in the coefficients $\alpha$ and $\beta$. In this sense, $p_{\mathrm{diag}}(n;\ell)$ belongs to the family of Strauss-type exponents.


In Table \ref{Table}, we summarize the values of parameters $\alpha$, $\beta$, $\alpha-\beta$ for \eqref{Eq alpha beta} related to the previously discussed semilinear hyperbolic equations.  

On the other hand, taking the values of $\alpha$ in Table \ref{Table}, by the relation $\frac{\alpha}{2}(p-1)=1,$ we find the Glassey-type exponents $p_{\mathrm{Gla}}(n)$, $p_{\mathrm{Gla}}((\ell+1)n)$ and $p_{\mathrm{Gla}}((\ell+1)n-2\ell)$ for wave and generalized Tricomi operators (see Remark \ref{Rem on Lai-Schiavone result}).

\begin{table}[http]
	\renewcommand\arraystretch{1.5}
	\begin{center}  
		\begin{tabular}{|c|c|c|c|c|}  
			\hline  
			Semilinear PDE &  critical $p$ &  $\alpha$ & $\beta$ &  $\alpha-\beta$ \\ \hline  
			$\ \ \ \ \,\, (\partial_t^2-\Delta)u=|u|^p$ & $p_{\mathrm{Str}}(n)$ & $n-1$ & $n-3$ & $2$ \\ \hline  
			$(\partial_t^2-t^{2\ell}\Delta)u=|u|^p$ & $p_0(n;\ell)$ & $(\ell+1)n-1$ & $(\ell+1)n-3+2\ell$ & $2(1-\ell)$ \\  \hline
			$\qquad\quad \ \, (\partial_t^2-t^{2\ell}\Delta)u=|\partial_tu|^p+|u|^p$ & $p_{\mathrm{diag}}(n;\ell)$ & $(\ell+1)n-1-2\ell$ & $(\ell+1)n-3$ & $2(1-\ell)$  \\  \hline
		\end{tabular}  
	\end{center}  
	\caption{Values of $p$, $\alpha$, $\beta$ and $\alpha-\beta$ in the quadratic equation \eqref{Eq alpha beta} for different semilinear PDEs}  \label{Table}
\end{table}  
\end{remark}

Let us determine now the intersection of the boundary of the blow-up region, i.e. $\partial \Gamma(n,\ell)$, with the straight lines with equations $q=p_0(n;\ell)$ and $p=p_1(n;\ell)$. We get started with the intersection of $\partial \Gamma(n,\ell)$ with the horizontal line. Denoting $\mathrm{Q}=(\widetilde{p}_0(n;\ell),p_0(n;\ell))$ the point such that $$\partial \Gamma(n,\ell)\cap \{q=p_0(n;\ell)\}=\{\mathrm{Q}\}$$ in the $(p,q)$ - plane, if we combine the relation $\big[\big((\ell+1)n-1\big)p-2\ell(p-1)-2\big](q-1)=4$ and \eqref{p_0(n,l) eq modified}, then, we obtain that $\widetilde{p}_0(n;\ell)$ is the greatest root of the quadratic equation
\begin{align*}
((\ell+1)n-1-2\ell) (p-1)^2+ ((\ell+1)n-3-2\ell) (p-1)-2(\ell+2)-\frac{4\ell(\ell+1)}{(\ell+1)n-1-2\ell}=0.
\end{align*}
We underline explicitly that 

$$\widetilde{p}_0(n;0)=  p_{\mathrm{Str}}(n)=p_{\mathrm{diag}}(n;0)=p_0(n;0).$$
So, denoting $\mathrm{D}\doteq (p_{\mathrm{diag}}(n;\ell),p_{\mathrm{diag}}(n;\ell))$, we find that $\mathrm{Q},\mathrm{D}\to (p_{\mathrm{Str}}(n),p_{\mathrm{Str}}(n))$ as $\ell\to 0$. The fact that $\mathrm{Q},\mathrm{D}$ collapse on the same point as $\ell\to 0$ allows us to understand better the above pointed out phenomenon of the enlargement for $\ell>0$ of the blow-up range on the diagonal. 
Indeed, for the semilinear Cauchy problem 
\begin{align*}
\begin{cases} 
\partial_t^2 u- t^{2\ell}\Delta u =|\partial_t u|^p+|u|^p, &  x\in \mathbb{R}^n, \ t>0,\\
u(0,x)=\varepsilon u_0(x), & x\in \mathbb{R}^n, \\ 
\partial_t u(0,x)=\varepsilon u_1(x), & x\in \mathbb{R}^n,
\end{cases}
\end{align*} 
the blow-up range 
is extended up to the wider range $1<p<p_{\mathrm{diag}}(n;\ell)$, in comparison to the corresponding semilinear Cauchy problems with either power nonlinearity $|u|^p$ or nonlinearity of derivative type $|\partial_t u|^p$. We recall that for $\ell=0$ this circumstance does not occur.


We may rewrite
\begin{align} \label{Boundary Gamma region}
\partial\Gamma(n,\ell)= \left\{p\in (1,\infty): q=f(p;n,\ell)  \right\} \quad \mbox{with} \quad f(p;n,\ell)\doteq 1+\frac{4}{((\ell+1)n-1-2\ell)p+2\ell-2}.
\end{align} Using that the function $f= f(p;n,\ell)$ in \eqref{Boundary Gamma region} is strictly decreasing with respect to $p$, the relation $p_0(n;\ell)<p_{\mathrm{diag}}(n;\ell)$ and that the point $\mathrm{D}$ is given by the intersection of $\partial\Gamma(n,\ell)$ with the diagonal $\{p=q\}$, we get $p_{\mathrm{diag}}(n;\ell)<\widetilde{p}_0(n;\ell) $.

Next we study the intersection of $\partial \Gamma(n,\ell)$ with the vertical line $p=p_1(n;\ell)$. 
 Neglecting the influence of the power nonlinearity $|u|^q$ and following \cite{LP20}, a blow-up result for $1<p\leqslant p_1(n;\ell)$ can be shown in the case with combined nonlinearity too.

For $p=p_1(n;\ell)$ on $\partial \Gamma(n,\ell)$ we find $$q=\widetilde{p}_1(n;\ell) \doteq \frac{((\ell+1)n+3)((\ell+1)n-1)-4\ell}{((\ell+1)n-1)^2-4\ell}. 
$$  Since the function $f= f(p;n,\ell)$ in \eqref{Boundary Gamma region} is strictly decreasing with respect to $p$, $p_1(n;\ell)<p_{\mathrm{diag}}(n;\ell)$ and the point $\mathrm{D}$ is given by the intersection of $\partial\Gamma(n,\ell)$ with the diagonal $\{p=q\}$, it follows $p_{\mathrm{diag}}(n;\ell)<\widetilde{p}_1(n;\ell) $.

%
%
%

Hence, if we denote by $\mathrm{P}$ the point in the $(p,q)$ - plane such that 
\begin{align*}
& \partial \Gamma(n,\ell) \cap \{p=p_1(n;\ell)\}=\{\mathrm{P}\}, 
\end{align*}	 then, from left to right we have the sorting $\mathrm{P},\mathrm{D},\mathrm{Q}$ on the branch of the hyperbola given by $\partial \Gamma(n,\ell)$. In other words, we obtained
\begin{align*}
& p_1(n;\ell)<p_{\mathrm{diag}}(n;\ell)<\widetilde{p}_0(n;\ell), \\ & p_0(n;\ell)<p_{\mathrm{diag}}(n;\ell)<\widetilde{p}_1(n;\ell).
\end{align*}

Summarizing, 
we extend the blow-up region for \eqref{Semi_Tri_Combined}  in the $(p,q)$ - plane to the curvilinear triangle $\mathrm{PQS}$. 
In Fig. \ref{Fig 1} we collect all properties that we discussed on the blow-up region for \eqref{Semi_Tri_Combined} in this section.



\begin{remark}\label{Rem on Lai-Schiavone result}
Let us provide a wider overview on blow-up results for the semilinear generalized Tricomi equation with nonlinearity of derivative type. In \cite{LS20} the authors found 
 $p_{\mathrm{Gla}}((\ell+1)n-2\ell)$ as upper bound for $p$ in the blow-up range for $n\geqslant 2$. 
For combined nonlinearity, neglecting $|u|^q$ one deduces a blow-up result assuming $1<p\leqslant p_{\mathrm{Gla}}((\ell+1)n-2\ell)$. Plugging $p=p_{\mathrm{Gla}}((\ell+1)n-2\ell)$ into the equation for $\partial \Gamma(n,\ell)$, we get  $q=p_{\mathrm{conf}}((\ell+1)n)$, where $p_{\mathrm{conf}}(n)\doteq \frac{n+3}{n-1}$ is the \emph{conformal exponent} for the classical semilinear wave equation (cf. \cite{LinSog95}). 
\end{remark}

\begin{figure}[h]
\centering
\begin{tikzpicture}
\begin{axis} [xmin=1,xmax=2.6,ymin=1,ymax=2.45, axis x line=middle,axis y line=middle, 
 xtick={1.571,2,2.478},ytick={1.770,2,2.366},
 xticklabels={$p_1(n;\ell)$,$p_{\mathrm{diag}}(n;\ell)$, $\widetilde{p}_0(n;\ell)$},
yticklabels={$p_0(n;\ell)$,$p_{\mathrm{diag}}(n;\ell)$,$\widetilde{p}_1(n;\ell)$},
 xlabel=$p$,ylabel=$q$, legend style={draw=none}] 
 \fill [pattern=
 horizontal lines, pattern color=green]  (0.3,0.3) -- (57,0.3) -- (57,143) --(0.3,143);
 \fill [pattern=
vertical lines, pattern color=blue]  (0.3,0.3) -- (159,0.3) -- (159,77) --(0.3,77);

\fill [pattern=north west lines, pattern color=red]  (57.5,77.5) -- (146,77) -- (99,99) -- (79.5,113) -- (57,135.5); 

 \addplot [domain=1.571:2.478,samples=40,
 smooth,thick,red,dashed]{1+(4)/(2.5*x-1)};
 \addplot [domain=1:2,samples=40,
 smooth,thin,black,densely dotted]{x};  
 \addplot [domain=1:2.55,samples=40,smooth,thick,blue]{1.770}; 
 \addplot [domain=2.54:2.6,
 samples=40,smooth,thick,blue,dotted]{1.770}; 
 \addplot
[domain=1:2.35,variable=\t,
 samples=40,smooth,thick,green]
({1.571},{t}); 
\addplot
[domain=2.35:2.45,variable=\t,
 samples=40,smooth,thick,green,dotted]
({1.571},{t}); 

 \addplot
[domain=1:2,variable=\t,
 samples=40,smooth,thin,densely dotted]
({2},{t}); 
 \addplot
[domain=1:1.770,variable=\t,
 samples=40,smooth,thin,densely dotted]
({2.478},{t}); 
\addplot [domain=1:2,samples=40,smooth,thin,densely dotted]{2}; 
\addplot [domain=1:1.571,samples=40,smooth,thin,densely dotted]{2.366};

\addplot [only marks,mark=*] coordinates { (1.571,1.770) };
\coordinate[label=below left: $\mathrm{S}$]
					(P)  at (59,78.5);
\addplot [mark=*,green] coordinates { (1.571,2.366) };
\addplot [mark=o,red] coordinates { (1.571,2.366) };
\coordinate[label= above right: $\mathrm{P}$]
					(P) at (56,135);	
\addplot [mark=*,blue] coordinates { (2.478,1.770) };
\addplot [mark=o,red] coordinates { (2.478,1.770) };
\coordinate[label= above right: $\mathrm{Q}$]
					(P) at (146,76);	
					
					
\addplot [mark=*,white] coordinates { (2,2) };
\addplot [mark=o,red] coordinates { (2,2)};
\coordinate[label= above right: $\mathrm{D}$]
					(P) at (99,99);			
							
\end{axis};
\end{tikzpicture}
\caption{Blow-up range for \eqref{Semi_Tri_Combined}: case $n\geqslant 2$}
\label{Fig 1}
\end{figure}

Moreover, collecting the upper bound estimates from Theorem \ref{Theorem}, Section \ref{Section |u|^q} and \cite{LinTu19,LP20} we have 
\begin{align*}
T(\varepsilon) \leqslant 
\begin{cases}  C \varepsilon^{-\frac{p(q-1)}{\theta(n,\ell,p,q)}}  & \mbox{if} \ (p,q)\in\Gamma(n,\ell); \\ 
C \varepsilon^{-\frac{q(q-1)}{\gamma(n,\ell,q)}}  & \mbox{if} \ 1<q<p_0(n;\ell) \ \mbox{and}\ n\geqslant 2; \\
 C \varepsilon^{-\left(\frac{q+1}{q-1}-(\ell+1)n\right)^{-1}} & \mbox{if} \ 1<q<p_{\mathrm{Kat}}((\ell+1)n) \ \mbox{and} \ n=1,2;  \\
 C \varepsilon^{-\left(\frac{1}{p-1}-\frac{(\ell+1)n-1}{2}\right)^{-1}} & \mbox{if} \ 1<p<p_{\mathrm{Gla}}((\ell+1)n); \\
\exp\big(C \varepsilon^{-q(q-1)}\big) & \mbox{if} \ q=p_0(n;\ell); \\
\exp\big(C \varepsilon^{-(p-1)} \big)& \mbox{if} \ p=p_{\mathrm{Gla}}((\ell+1)n).
\end{cases}
\end{align*}


 For the the lifespan estimate in the critical case $q=p_0(n;\ell)$ we refer to \cite[Theorem 1.3]{LinTu19}, while for the upper bound estimates in the case $1<p\leqslant p_{\mathrm{Gla}}((\ell+1)n)$ we address the interested reader to \cite[Theorem 1.1]{LP20}.

%

Finally, let us consider more in detail the case $n=1$.
In the  one dimensional case the blow-up condition \eqref{blow up range p,q} is 
\begin{align}\label{blow up range p,q n=1}
(-\ell p +2\ell-2)(q-1)<4.
\end{align} The main difference in comparison to the higher dimensional cases is that the constant that multiplies $p$ in \eqref{blow up range p,q n=1} is negative, modifying substantially the shape of the blow-up region. We will consider separately the cases $\ell\in (0,2]$, $\ell\in (2,4]$ and $\ell\in (4,\infty)$. 

 For the semilinear wave equations ($\ell=0$) with power nonlinearity $|u|^q$ and with nonlinearity of derivative type $|\partial_t u|^p$, respectively,  it is well-know in the literature (cf. \cite{Kato80,Zhou01}) that for $n=1$ blow-up results for local in time solutions hold for any $q>1$ and any $p>1$, respectively. Therefore, for the Cauchy problem associated with semilinear wave equation with combined nonlinearity in the 1-d case $\partial_t^2u-\partial_x^2 u=|\partial_t u|^p+|u|^q$ for any $p,q>1$ it is possible to prove the blow-up in finite time of the corresponding local in time solution, provided that suitable sign assumptions are required for the Cauchy data.  In the first two subcases, namely for $\ell\in (0,4]$, also for \eqref{Semi_Tri_Combined} the same situation occurs as for the corresponding wave equation ($\ell=0$), that is, the blow-up range coincides with the entire set $\{p,q>1 \}$. However, in order to prove this fact we need to distinguish  the case $\ell\in (0,2]$ from the case $\ell\in (2,4]$.
 
  For $\ell\in (0,2]$ it holds
 \begin{align*}
 \{(p,q)\in \mathbb{R}^2: p,q>1\} \subset \{(p,q)\in\mathbb{R}^2: (-\ell p +2\ell-2)(q-1)<4\},
 \end{align*}
  so it follows immediately that the blow-up range from Theorem \ref{Theorem} is the full set $\{p,q>1 \}$, covering the case $p>p_{\mathrm{Gla}}(\ell+1)$ and $q>p_{\mathrm{Kat}}(\ell+1)$ as well (see Fig. \ref{Fig 2}). We point out that, as in the one dimensional case the critical exponent for the semilinear generalized Tricomi equation with power nonlinearity is $p_{\mathrm{Kat}}(\ell+1)$, we  consider the point $\mathrm{S}'\doteq (p_{\mathrm{Gla}}(\ell+1),p_{\mathrm{Kat}}(\ell+1))$ in place of $\mathrm{S}$ in the figures of the blow-up regions for $n=1$.
 
\color{black} 
 


\begin{figure}[h]
\centering
\begin{tikzpicture}
\begin{axis} [xmin=1,xmax=5,ymin=1,ymax=5, axis x line=middle,axis y line=middle, 
 xtick={3},ytick={3},
 xticklabels={$p_{\mathrm{Gla}}(\ell+1)$},
yticklabels={$p_{\mathrm{Kat}}(\ell+1)$},
 xlabel=$p$,ylabel=$q$, legend style={draw=none}] 
 \fill [pattern=horizontal lines, pattern color=green]  (0.3,0.3) -- (200,0.3) -- (200,400) --(0.3,400);
 \fill [pattern=vertical lines, pattern color=blue]  (0.3,0.3) -- (400,0.3) -- (400,200) --(0.3,200);
\fill [pattern=north west lines, pattern color=red] (200,200) -- (400,200) -- (400,400) -- (200,400);
 \addplot [domain=1:4.85,samples=40,smooth,thick,blue]{3}; 
 \addplot [domain=4.85:5,
 samples=40,smooth,thick,blue,dotted]{3}; 
 \addplot
[domain=1:4.85,variable=\t,
 samples=40,smooth,thick,green]
({3},{t}); 
\addplot
[domain=4.85:5,variable=\t,
 samples=40,smooth,thick,green,dotted]
({3},{t}); 
\addplot [only marks,mark=*] coordinates { (3,3) };
\coordinate[label=above left: $\mathrm{S}'$]
					(P)  at (200,200);

\end{axis};
\end{tikzpicture}
\caption{Blow-up range for \eqref{Semi_Tri_Combined}: case $n=1$, $\ell\in (0,2]$}
\label{Fig 2}
\end{figure}
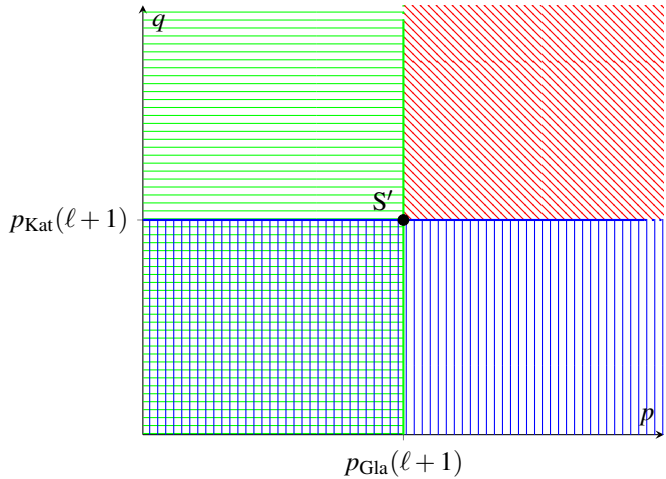


On the other hand, for $\ell\in (2,4]$ the hyperbola
\begin{align*}
\{(p,q)\in \mathbb{R}^2: (-\ell p +2\ell-2)(q-1)=4\}
\end{align*}
 has asymptotes $q=1$ and $p=2-\frac{2}{\ell}>1$. Hence, the condition in \eqref{blow up range p,q n=1} does not cover the entire set $\{p,q>1 \}$. Nevertheless, $2-\frac{2}{\ell}<1+\frac{2}{\ell}=p_{\mathrm{Gla}}(\ell+1)$, so, using the blow-up result for $1<p\leqslant p_{\mathrm{Gla}}(\ell+1)$ that can be proved by working just with the nonlinearity of derivative type (see \cite[Theorem 1.1]{LP20}), we close the gap left by Theorem \ref{Theorem} (cf. Fig. \ref{Fig 3}).



\begin{figure}[h]
\centering
\begin{tikzpicture}
\begin{axis} [xmin=1,xmax=2,ymin=1,ymax=3, axis x line=middle,axis y line=middle, 
 xtick={1.429,1.571},ytick={1.571,2},
 xticklabels={$2-\frac{2}{\ell}\qquad$,$p_{\mathrm{Gla}}(\ell+1)$},
yticklabels={$p_{\mathrm{Kat}}(\ell+1)$,$\frac{\ell+2}{\ell-2}$},
 xlabel=$p$,ylabel=$q$, legend style={draw=none}] 
 \fill [pattern=horizontal lines, pattern color=green]  (0.3,0.3) -- (57.1,0.3) -- (57.1,20) --(0.3,20);
 \fill [pattern=vertical lines, pattern color=blue]  (0.1,0.1) -- (100,0.1) -- (100,5.7) --(0.1,5.7);
\fill [pattern=north west lines, pattern color=red]  (57.1,5.7) -- (100,5.7) -- (100,20) -- (57.1,20); 

 \addplot [domain=1:1.410,samples=40,
 smooth,thick,red,dashed]{1+(1.5)/((-3.5)*x+5)};
 \addplot [domain=1:1.95,samples=40,smooth,thick,blue]{1.571}; 
 \addplot [domain=1.95:2,
 samples=40,smooth,thick,blue,dotted]{1.571}; 
 \addplot
[domain=1:2.9,variable=\t,
 samples=40,smooth,thick,green]
({1.571},{t}); 
\addplot
[domain=2.9:3,variable=\t,
 samples=40,smooth,thick,green,dotted]
({1.571},{t}); 

\addplot
[domain=1:3.9,variable=\t,
 samples=40,smooth,thin,densely dotted]
({1.429},{t}); 
\addplot
[domain=3.9:4,variable=\t,
 samples=40,smooth,thin,densely dotted]
({1.429},{t}); 

\addplot [only marks,mark=*] coordinates { (1.571,1.571) };
\coordinate[label=above left: $\mathrm{S}'$]
					(P)  at (57.1,5.7);
	
\end{axis};
\end{tikzpicture}
\caption{Blow-up range for \eqref{Semi_Tri_Combined}: case $n=1$, $\ell\in (2,4]$}
\label{Fig 3}
\end{figure}
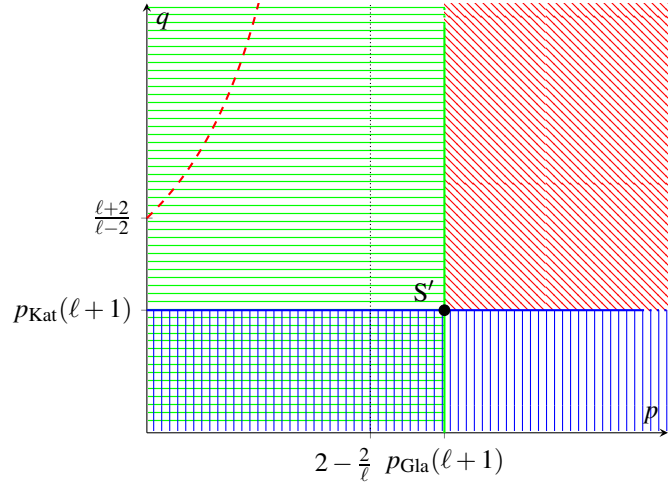


Finally, for $\ell\in (4,\infty)$, even combining the results of Theorem \ref{Theorem} and \cite[Theorem 1.1]{LP20}, we do not obtain $\{p,q>1 \}$ as blow-up range for \eqref{Semi_Tri_Combined}. More precisely, we have to exclude the region
\begin{align*}
\big\{(p,q)\in (1,\infty)^2: p>p_{\mathrm{Gla}}(\ell+1) \ \ \mbox{and} \ \ (-\ell p +2\ell-2)(q-1)\geqslant 4 \big\},
\end{align*} contained in the strip $\{p_{\mathrm{Gla}}(\ell+1)<p<2-\frac{2}{\ell}\}$, from the blow-up range, since in this case the asymptote $p=2-\frac{2}{\ell}$ lies to the right of the critical value $p=p_{\mathrm{Gla}}(\ell+1)$ (see Fig. \ref{Fig 4}).



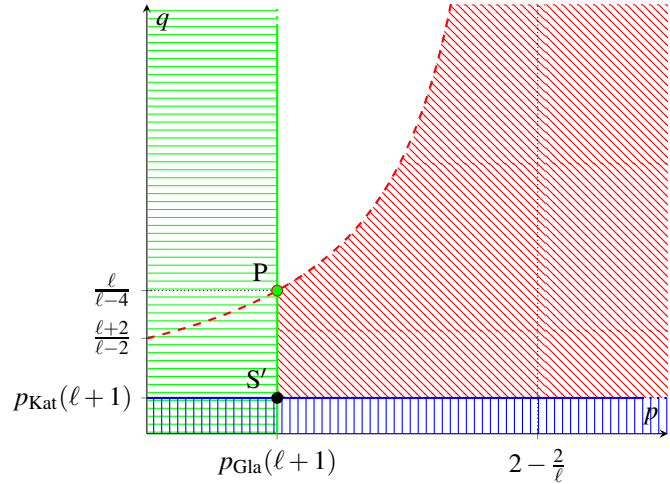
\begin{figure}[h]
\centering
\begin{tikzpicture}
\begin{axis} [xmin=1,xmax=2,ymin=1,ymax=4, axis x line=middle,axis y line=middle, 
 xtick={1.25,1.75},ytick={1.25,1.667,2},
 xticklabels={$p_{\mathrm{Gla}}(\ell+1)$,$2-\frac{2}{\ell}$},
yticklabels={$p_{\mathrm{Kat}}(\ell+1)$,$\frac{\ell+2}{\ell-2}$,$\frac{\ell}{\ell-4}$},
 xlabel=$p$,ylabel=$q$, legend style={draw=none}] 
 \fill [pattern=horizontal lines, pattern color=green]  (0.3,0.3) -- (25,0.3) -- (25,300) --(0.3,300);
 \fill [pattern=vertical lines, pattern color=blue]  (0.3,0.3) -- (100,0.3) -- (100,25) --(0.3,25);
\fill [pattern=north west lines, pattern color=red] (25,25) -- (100,25) -- (100,300) -- (58.3,300) -- (55,250) -- (50.5,200) -- (42,150) -- (35.5,125) -- (25,100);
\addplot [domain=1:1.70,samples=40,
 smooth,thick,red,dashed]{1+(4)/((-8)*x+14)};
\addplot [domain=1:1.95,samples=40,smooth,thick,blue]{1.25}; 
\addplot [domain=1.95:2,
 samples=40,smooth,thick,blue,dotted]{1.25};
\addplot [domain=1:1.25,
 samples=40,smooth,thin,densely dotted]{2}; 
\addplot
[domain=1:3.85,variable=\t,
 samples=40,smooth,thick,green]
({1.25},{t}); 
\addplot
[domain=3.85:4,variable=\t,
 samples=40,smooth,thick,green,dotted]
({1.25},{t}); 
\addplot
[domain=1:4,variable=\t,
 samples=40,smooth,thin,densely dotted]
({1.75},{t}); 
\addplot [only marks,mark=*] coordinates { (1.25,1.25) };
\coordinate[label=above left: $\mathrm{S}'$]
					(P)  at (25,25);
\addplot [mark=*,green] coordinates { (1.25,2) };
\addplot [mark=o,red] coordinates { (1.25,2) };
\coordinate[label= above left: $\mathrm{P}$]
					(P) at (25,100);	
\end{axis};
\end{tikzpicture}
\caption{Blow-up range for \eqref{Semi_Tri_Combined}: case $n=1$, $\ell\in (4,\infty)$}
\label{Fig 4}
\end{figure}

\paragraph*{Final note}
After the preparation of the final version of the present manuscript, the authors found out the existence of the preprint \cite{HH20T}, where the same blow-up result for \eqref{Semi_Tri_Combined} has been obtained independently.

\section*{Acknowledgments}

A. Palmieri is supported by the GNAMPA project `Problemi stazionari e di evoluzione nelle equazioni di campo nonlineari dispersive'. S. Lucente is supported by the PRIN 2017 project `Qualitative and quantitative aspects of nonlinear PDEs' and by the GNAMPA project `Equazioni di tipo dispersivo: teoria e metodi'.


\appendix

\section{Properties of the modified Bessel function of the second kind} \label{App A}

In this appendix, we recall the main properties of modified Bessel functions of the second kind that we employ throughout the paper. For further details we refer to \cite[Chapter 10]{NIST10}.

The modified Bessel function of the second kind of order $\nu$ satisfies the second-order linear differential equation with polynomial coefficients
\begin{align} \label{ODE Mod Bessel funct 2nd kind}
\tau^2 \mathrm{K}_{\nu}''(\tau)+\tau \mathrm{K}_{\nu}'(\tau)-(\nu^2+\tau^2)\mathrm{K}_{\nu}(\tau)=0
\end{align}

The derivative of the modified Bessel function of the second kind can be expressed through the following recursive relations:
\begin{equation} \label{recursive identity K' alpha}
\begin{split}
\mathrm{K}_\nu'(\tau) & = -\mathrm{K}_{\nu-1}(\tau)-\frac{\nu}{\tau} \, \mathrm{K}_\nu(\tau), \\
\mathrm{K}_\nu'(\tau) &  = -\mathrm{K}_{\nu+1}(\tau)+\frac{\nu}{\tau} \, \mathrm{K}_\nu(\tau) ,\\
 \mathrm{K}_\nu'(\tau) &  =  -\frac{1}{2}\big(\mathrm{K}_{\nu-1}(\tau)+\mathrm{K}_{\nu+1}(\tau)\big).
 \end{split}
\end{equation}
Finally, the following asymptotic estimates for $\mathrm{K}_\nu$ hold for small and large argument, respectively,
\begin{align} \label{asymptotic K alpha small}
\mathrm{K}_\nu(\tau) &\sim 2^{\nu-1} \Gamma(\nu) \ \tau^{-\nu} \ \ \quad \qquad \qquad \mbox{as} \ \ \tau\to 0^+,  \ \ \mbox{for} \ \ \Re\nu >0, \\
\label{asymptotic K alpha}
\mathrm{K}_\nu(\tau) & = \sqrt{\frac{\pi}{2\tau}} \, \mathrm{e}^{-\tau} \big(1+O(\tau^{-1})\big) \qquad \mbox{as} \ \ \tau\to \infty.
\end{align}



\addcontentsline{toc}{chapter}{Bibliography}


\begin{thebibliography}{00}

%
%
%
%
\bibitem{DA20} D'Abbicco, M.: 
\newblock{The semilinear Euler-Poisson-Darboux equation: a case of wave with critical dissipation.} \newblock{Preprint, arXiv:2008.08703v2, 2020.}
%
%
\bibitem{DabbLuc15} D'Abbicco, M., Lucente, S.:
\newblock{NLWE with a special scale invariant damping in odd space dimension.}
\newblock{\em Discrete Contin. Dyn. Syst.}
\newblock{Dynamical systems, differential equations and applications.} 10th AIMS Conference. Suppl., 312--319 (2015). doi: 10.3934/proc.2015.0312.
%
\bibitem{DLR15}  D'Abbicco, M., Lucente, S., Reissig, M.: 
\newblock {A shift in the Strauss exponent for semilinear wave equations with a not effective damping.}
\newblock {\em J. Differential Equations} {\bf 259}(10), 5040--5073 (2015) 
%

\bibitem{DFW19} Dai, W., Fang, D.,  Wang C.: 
\newblock{Global existence and lifespan for semilinear wave equations with mixed nonlinear terms.} {\em J. Differential Equations} {\bf 267}(5), 3328--3354 (2019)

\bibitem{DKS21} Dai, W., Kubo, H., Sobajima, M.:
\newblock{Blow-up for Strauss type wave equation with damping and potential.}
\newblock{\em Nonlinear Anal. Real World Appl.} {\bf 57}, 103195 (2021)  
%
%
\bibitem{DAn95} D'Ancona, P.:
\newblock{A note on a theorem of J\"orgens.}
\newblock{\em Math. Z.} {\bf 218}(2), 239-252  (1995)


%
%
%
%
\bibitem{GKW19} Georgiev, V., Kubo, H.,  Wakasa, K.:
\newblock {Critical exponent for nonlinear damped wave equations with non-negative potential in 3D.}
\newblock {\em J. Differential Equations} {\bf 267}(5),  3271--3288 (2019)
%
%
%
%
%
\bibitem{HH20} Hamouda, M., Hamza, M.A.:
\newblock{Blow-up for wave equation with the scale-invariant damping and combined nonlinearities.} \newblock{\em Math. Methods Appl. Sci.} 1–10,  (2020). https://doi.org/10.1002/mma.6817

\bibitem{HH20i} Hamouda, M., Hamza, M.A.:
\newblock{Improvement on the blow-up of the wave equation with the scale-invariant damping and combined nonlinearities.} \newblock{Preprint, arXiv:2008.02109v3, 2020.}

\bibitem{HH20m} Hamouda, M., Hamza, M.A.:
\newblock{A blow-up result for the wave equation with localized initial data: the scale-invariant damping and mass term with combined nonlinearities.} \newblock{Preprint, arXiv:2010.05455, 2020.}


\bibitem{HH20T}  Hamouda, M., Hamza, M.A.:
\newblock{Blow-up and lifespan estimate for the generalized Tricomi equation with mixed nonlinearities.} \newblock{Preprint, arXiv:2011.04895, 2020.}


\bibitem{HZ14} Han, W., Zhou, Y.:
\newblock{Blow up for some semilinear wave equations in multi-space dimensions.} {\em Comm. Partial Differential Equations} {\bf 39}(4), 651--665  (2014)

%

%
\bibitem{HWY17} He, D., Witt, I., Yin, H.:  
\newblock{On the global solution problem for semilinear generalized Tricomi equations, I.}
\newblock{\em Calc. Var. Partial Differential Equations} {\bf 56}(2), 21 (2017)
%
\bibitem{HWY16t} He, D., Witt, I., Yin, H.:
\newblock{On the global solution problem of semilinear generalized Tricomi equations, II.}
 \newblock{Preprint, arXiv:1611.07606, 2016.}
%
\bibitem{HWY17d1} He, D., Witt, I., Yin, H.:
\newblock{On semilinear Tricomi equations with critical exponents or in two space dimensions.}
\newblock{\em J. Differential Equations} {\bf 263}(12), 8102--8137 (2017) 
%
%
\bibitem{HWY20} He, D., Witt, I., Yin, H.:
\newblock{On the Strauss index of semilinear Tricomi equation.}
 \newblock{\em Commun. Pure Appl. Anal.} {\bf 19}(10), 4817--4838 (2020)
%
\bibitem{HL96} Hong, J., Li, G.: 
\newblock{$L^p$ estimates for a class of integral operator.}
\newblock{\em J. Partial Differ. Equ.} \textbf{9}(4), 343--364 (1996).
%
%

\bibitem{HWY16}  Hidano, K., Wang,  C.,  Yokoyama, K.: 
	\newblock{Combined effects of two nonlinearities in lifespan of small solutions to semi-linear wave equations.} {\em Math. Ann.} {\bf 366}(1-2), 667--694 (2016) 
%
\bibitem{IS17} Ikeda, M.,  Sobajima, M.:
\newblock{Life-span of solutions to semilinear wave equation with time-dependent critical damping for specially localized initial data.}
\newblock{\em Math. Ann.} {\bf 372}(3-4), 1017--1040 (2018)  
%
\bibitem{ISW18} Ikeda, M.,  Sobajima, M., Wakasa, K.:  
\newblock{Blow-up phenomena of semilinear wave equations and their weakly coupled systems.} 
\newblock{\em J. Differential Equations} {\bf 267}(9), 5165--5201 (2019) 
%
%
%
%
%
%
\bibitem{Kato80}  Kato, T.:
\newblock{Blow-up of solutions of some nonlinear hyperbolic equations.} \newblock{\em Comm. Pure Appl. Math.} {\bf 33}(4), 501--505 (1980)
%
%
%
%
%
%


\bibitem{LS20} Lai, N.A., Schiavone, N.M.:
\newblock{Blow-up and lifespan estimate for generalized Tricomi equations related to Glassey conjecture.} \newblock{Preprint, arXiv:2007.16003, 2020.}   

\bibitem{LT18Scatt} Lai, N.A., Takamura, H.:
\newblock{Blow-up for semilinear damped wave equations with subcritical exponent in the scattering case.} \newblock{\em Nonlinear Anal.} {\bf 168}, 222--237 (2018)



%
%

\bibitem{LT18ComNon}  Lai, N.A., Takamura, H.:
\newblock{Nonexistence of global solutions of wave equations with weak time-dependent damping and combined nonlinearity.} \newblock{\em Nonlinear Anal. Real World Appl.} {\bf 45}, 83--96 (2019)  


\bibitem{LTW17} Lai, N.A., Takamura, H., Wakasa, K.:  
\newblock{Blow-up for semilinear wave equations with the scale invariant damping and super-Fujita exponent.}
\newblock{\em J. Differential Equations} {\bf 263}(9), 5377--5394 (2017)


\bibitem{LinTu19} Lin, J., Tu, Z.:
\newblock{Lifespan of semilinear generalized Tricomi equation with Strauss type exponent.}
\newblock{Preprint, arXiv:1903.11351v2 , 2019.}   

%
\bibitem{LinSog95}  Lindblad, H.,  Sogge, C.D.:
\newblock{On existence and scattering with minimal regularity for semilinear
wave equations.}
\newblock{\em J. Funct. Anal.} {\bf 130}(2), 357--426 (1995)
%
%
%
%
%
%
%
%

\bibitem{LP20} Lucente, S., Palmieri, A.: 
\newblock{A blow-up result for a generalized Tricomi equation with nonlinearity of derivative type.}   \newblock{Preprint, arXiv:2007.12895, 2020.}   
%
%
%
%


\bibitem{NIST10}  Olver, F.W.J., Lozier, D.W.,  Boisvert, R.F.,  Clark, C.W. (Eds.): \newblock{\em NIST Handbook of Mathematical Functions.} Cambridge University Press, New York, NY (2010). 

%
\bibitem{Pal18odd}  Palmieri, A.:
\newblock{Global existence results for a semilinear wave equation with scale-invariant damping and mass in odd space dimension.} 
\newblock{ In M. D’Abbicco et al. (eds.),} \newblock{\em New Tools for Nonlinear PDEs and Application}, \newblock{Trends in Mathematics,} (2019) https://doi.org/10.1007/978-3-030-10937-0$\_$12
%
\bibitem{Pal18even} Palmieri, A.:
\newblock{A global existence result for a semilinear scale-invariant wave equation in even dimension.} \newblock{\em Math. Methods Appl. Sci.} {\bf 42}(8), 2680--2706 (2019)
%
%
%
%
%
\bibitem{PalRei18} Palmieri, A.,  Reissig, M.:
\newblock{A competition between Fujita and Strauss type exponents for blow-up of semi-linear wave equations with scale-invariant damping and mass.}
\newblock{\em J. Differential Equations} {\bf 266}(2-3), 1176--1220 (2019)

\bibitem{PalTak19}  Palmieri, A., Takamura, H.:
\newblock{Blow-up for a weakly coupled system of semilinear damped wave equations in the scattering case with power nonlinearities.} \newblock{\em Nonlinear Anal.} {\bf 187}, 467--492 (2019)

%
%
%
%
\bibitem{PT18}  Palmieri, A.,  Tu, Z.:
\newblock{Lifespan of semilinear wave equation with scale invariant dissipation and mass and sub-Strauss power nonlinearity.} \newblock{\em J. Math. Anal. Appl.} {\bf 470}(1), 447--469 (2019)
%
%
%

%
%


%
%
%
%
%
%
%
%
%
%
%
%

\bibitem{Tak15} Takamura, H.:
\newblock{Improved Kato’s lemma on ordinary differential inequality and its with mixed data application to semilinear wave equations.}
\newblock{\em Nonlinear Anal.} {\bf 125}, 227-240 (2015)

%
%
%
%
\bibitem{TL1709}  Tu, Z., Lin, J.:
\newblock{A note on the blowup of scale invariant damping wave equation with sub-Strauss exponent.} 
\newblock{Preprint, arXiv:1709.00866v2, 2017.} 

%
%
%
%
%
%
%
%
%
%
%
%
%
%
%
%
\bibitem{YZ06} Yordanov, B.T., Zhang, Q.S.:
	\newblock{Finite time blow up for critical wave equations in high dimensions.}
	\newblock{\em J. Funct. Anal.} {\bf 231}(2), 361--374  (2006)
%
%
%
%
\bibitem{Zhou01} Zhou, Y.: \newblock{Blow up of solutions to the Cauchy problem for nonlinear wave equations.} \newblock{\em  Chinese Ann. Math. Ser. B} {\bf 22}(3), 275--280  (2001) 
%

	







%

%

%




	
	%

\end{thebibliography}
\end{document}